\numberwithin{equation}{section}
\definecolor{dkgreen}{rgb}{0,0.6,0}
\definecolor{gray}{rgb}{0.5,0.5,0.5}
\title{A New Measure of Coarseness for Solutions to Cahn--Hilliard Equations
\renewcommand\thefootnote{}
\thanks{MSC 2020: 
35Q35,
76T99,
76D05,
35Q30
}
}
\author{Peter Howard
\thanks{Corresponding author;
    Department of Mathematics, 
    Texas A\&M University, 
    College Station, 
    TX 77843, USA, 
    phoward@tamu.edu}
\and
Adam Larios
\thanks{Department of Mathematics, 
        University of Nebraska--Lincoln,
        Lincoln, NE 68588-0130, USA, 
        alarios@unl.edu}
\and
Quyuan Lin
\thanks{School of Mathematical and Statistical Sciences,              
    Clemson University, 
    Clemson, SC 29634, USA, 
    quyuanl@clemson.edu}
}
\date{\today}
\begin{document}

\maketitle

\begin{abstract} 
We introduce a new measure of coarseness for characterizing
phase separation processes such as those described by 
Cahn--Hilliard equations. An advantage of our measure is that it 
remains consistent throughout the evolution, including for solutions
with no periodic structure. We use our measure to compare two previous 
models of coarsening dynamics with numerically generated dynamics,
providing the first direct check that we are aware of for the 
efficacy of these methods. 
\end{abstract}


\noindent \textbf{Keywords:}
Cahn--Hilliard equation,   
coarsening dynamics,
phase separation,
spinodal decomposition.


\section{Introduction}\label{introduction}

For processes of phase separation such as those modeled by Cahn--Hilliard
equations, solutions are typically viewed as evolving from less coarse 
states (e.g., from nearly homogeneous mixtures) to more coarse states 
(e.g., to fully separated mixtures). The rates at which such processes evolve
are of critical importance both for understanding the nature of the processes
and for applications, and in order to study these rates it is necessary to 
have a consistent and effective measure of coarseness that can be employed 
throughout the phase separation process. Perhaps the earliest such measure was introduced by 
J. S. Langer in \cite{L71}. Noting that late-stage solutions to 1D Cahn--Hilliard
equations are nearly periodic, Langer suggested using the approximate period, 
appropriately defined, as a measure of coarseness. Moreover, by combining this 
measure of coarseness with an approach to evolution based on perturbations 
instigated by thermal fluctuation in the materials under consideration, 
Langer was able to arrive at an elegant
analytical formula for the time evolution of coarseness in solutions to Cahn--Hilliard
equations. 

Motivated by Langer's work, the first author of the current study has introduced
an alternative analytical approach to coarsening dynamics based on the spectrum 
associated with certain unstable periodic stationary solutions to Cahn--Hilliard 
equations \cite{H11}. By utilizing exact periodic solutions rather than asymptotically 
approximate periodic solutions, this latter approach extends Langer's method to 
earlier stages of the coarsening process, providing additional information about 
the transition from mid-stage dynamics to late-stage dynamics. Nonetheless, there 
remains a critical 
period of minimal coarseness during which neither method reasonably applies. Indeed,
since both methods rely on periodicity of some form in order for the measure of 
coarseness to be defined, it is reasonable to assert that neither method provides 
meaningful values during the period of minimal coarseness. 

An alternative measure of coarseness was introduced by Kohn and Otto in \cite{KO02} 
in the multidimensional case (Definition 1 from p.~383 of \cite{KO02}). Translated 
to the current one-dimensional setting, this measure can be expressed as follows. 

\begin{definition} \label{Kohn-Otto-Definition}
    For any function $\phi (x)$ periodic on $[-L, +L]$ with mean 
    value 0, the Kohn--Otto length is defined to be 
    \begin{equation*}
        \ell_{KO} 
        := \sup \Big{\{} \frac{1}{2L} \int_{-L}^{+L} \phi (x) \zeta (x) dx: \,
        \zeta \in C^1 ([-L, +L]) \,\, \textrm{periodic with}\, \sup_{x \in [-L,+L]} |\zeta' (x)| \le 1\Big{\}}.
    \end{equation*}
\end{definition}
The clear advantage of the Kohn--Otto length over Langer's measure 
is that it readily generalizes to the multidimensional setting. Nonetheless, 
it retains dependence on periodicity, and so is not suitable for effectively 
capturing early-stage dynamics.   

The primary goal of the current analysis is to introduce a new measure of 
coarseness in the 1D setting that agrees closely with Langer's when solutions 
are nearly 
periodic, but can be naturally extended throughout the entirety of the 
coarsening process. This new measure allows us to compare the analytic 
methods of \cite{L71} and \cite{H11} with computational results, and 
better gauge the regions of efficacy for these methods. More generally, 
our measure of coarseness opens the door for robust 
studies of coarsening rates not only for Cahn--Hilliard equations, but 
for systems as well, such as Cahn--Hilliard--Navier--Stokes couplings---an 
avenue the authors will explore in a subsequent study. 

The remainder of the paper is organized as follows. In Section \ref{ch-coarsening}, 
we collect 
observations about Cahn--Hilliard equations that will be important to our 
later study, and in Section \ref{MeasureofCoursening} we use the observations to introduce our 
measure of coarseness. In Section \ref{coarsening-rates-section}, we review and compare the coarsening
rates computed in \cite{L71} and \cite{H11}, and in Section \ref{sec-computational-results} we use
our new measure of coarseness to make comparisons with numerically 
generated rates. In this way, we are able to determine the timescales
on which the analytic approaches are most effectively. We conclude
with a discussion of future work, and relegate two technical proofs
to a short appendix. 


\section{Cahn--Hilliard Equations} 
\label{ch-coarsening}

In this section, we set the stage for our analysis by reviewing some 
key properties of Cahn--Hilliard equations,  
\begin{equation} \label{ch}
\phi_t = 
\Big( M(\phi) (-\kappa \phi_{xx} + F' (\phi))_x \Big)_x,
\end{equation}
where $\kappa > 0$ and for this discussion we will make the 
following assumptions on $M$ and $F$. 

\medskip
\noindent
{\bf (A)} $M \in C^2 (\mathbb{R})$, and there exists a constant 
$m_0 > 0$ so that with $M (\phi) \ge m_0$ for all $\phi \in \mathbb{R}$ 
(i.e., non-degenerate mobility);
$F \in C^4 (\mathbb{R})$ has a double-well form: there exist real
numbers $\alpha_1 < \alpha_2 < \alpha_3 < \alpha_4 < \alpha_5$ so
that $F$ is strictly decreasing on $(-\infty, \alpha_1)$ and
$(\alpha_3, \alpha_5)$ and strictly increasing on $(\alpha_1, \alpha_3)$
and $(\alpha_5, +\infty)$, and additionally $F$ is concave up on
$(-\infty, \alpha_2) \cup (\alpha_4, +\infty)$ and concave down on
$(\alpha_2,\alpha_4)$.  

\medskip

\begin{remark} With its origins in the work of John W. Cahn 
and John E. Hilliard, especially \cite{Cahn1961, CH58}, equation \eqref{ch}
is now well established as a foundational model of phase separation dynamics. 
Although a general review of references on \eqref{ch} is far 
beyond the scope of this discussion, we mention that our approach 
and methods are closely related to the work on initiation of 
phase separation in \cite{Grant91, Grant93}, the analyses of periodic 
solutions in \cite{H09, H11}, the analyses of kink and antikink solutions in 
\cite{BKT1999, CCO2001, H07, OW2014}, and the analyses of coarsening
rates in \cite{H11, L71, watson2003coarsening}.
\end{remark}

We observe at the outset that for each $F$ satisfying Assumptions {\bf (A)}, 
there exists a unique pair of values $\phi_1$ and $\phi_2$ 
(the {\it binodal} values) so that
\begin{equation}
F'(\phi_1) = \frac{F(\phi_2) - F(\phi_1)}{\phi_2 - \phi_1} = F'(\phi_2)
\end{equation}
and such that the line passing through
$(\phi_1, F(\phi_1))$ and $(\phi_2, F(\phi_2))$ lies entirely on or 
below $F$. (See Figure \ref{F-figure}.)
Also, we note that for any linear function 
$G(\phi) = A \phi + B$ we can replace $F(\phi)$ in \eqref{ch} with 
$H(\phi) = F(\phi) - G(\phi)$ without changing the equation in 
any way.  If we choose 
\begin{equation*}
G(\phi) := \frac{F(\phi_2) - F(\phi_1)}{\phi_2 - \phi_1} (\phi - \phi_1) + F(\phi_1),
\end{equation*}
then $H(\phi)$ has local minima at the binodal values, 
with $H(\phi_1) = H(\phi_2) = 0$, and a local maximum at the 
unique value $\phi_h$ for which 
\begin{equation*}
F'(\phi_h) = \frac{F(\phi_2) - F(\phi_1)}{\phi_2 - \phi_1}
\quad \textrm{and} \quad F''(\phi_h) < 0.
\end{equation*}
Finally, upon replacing $\phi$ with $\phi+\phi_h$ 
we can shift $H$ so that the local maximum is located at 
$\phi_h = 0$. For the remainder of our analysis we will assume 
that these transformations have been carried out, and we 
will denote the resulting function $F$. The 
standard form that we will use for numerical computations 
and some specific analytical results is 
\begin{equation} \label{quarticF}
F(\phi) = \frac{1}{4} \alpha \phi^4 - \frac{1}{2} \beta \phi^2
+ \frac{1}{4} \frac{\beta^2}{\alpha}
= \frac{\alpha}{4} \left(\phi^2 - \frac{\beta}{\alpha}\right)^2,
\end{equation} 
which clearly satisfies our general assumptions for all 
$\alpha, \beta > 0$.

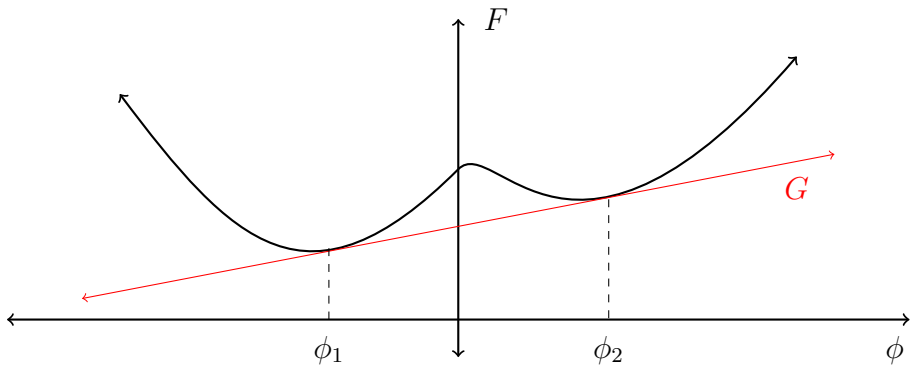
\begin{figure}[ht]
\begin{center}
\begin{tikzpicture}
\draw[thick, <->] (-6,0) -- (6,0);
\node at (5.8,-.4) {$\phi$};
\draw[thick,<->] (0,-.5) -- (0,4);
\node at (.5,4) {$F$};
%
\draw[thick,<-] (-4.5,3) .. controls (-3,1) and (-2,0) .. (0,2);
\draw[thick,->] (0,2) .. controls (.5,2.5) and (1.5,0) .. (4.5,3.5); 
\draw[<->,red] (-5,.28) -- (5,2.2);
\node at (4.5,1.75) {$\color{red} G$};
\draw[dashed] (-1.72,.95) -- (-1.72,0);
\node at (-1.72,-.4) {$\phi_1$};
\draw[dashed] (2,1.6) -- (2,0);
\node at (2,-.4) {$\phi_2$};
\end{tikzpicture}
\end{center}
\caption{The bulk free energy $F$ along with its supporting line.} 
\label{F-figure}
\end{figure}

\begin{remark} \label{F-remark}
For convenient reference, we summarize the properties that 
$F$ will have after the transformations described above:
$F \in C^4 (\mathbb{R})$, and there exist values 
$\phi_1 < \phi_3 < 0 < \phi_4 < \phi_2$ so that 
$F$ is strictly decreasing on $(-\infty, \phi_1)$ and
$(0, \phi_2)$ and strictly increasing on $(\phi_1, 0)$
and $(\phi_2, +\infty)$; $F$ is convex on
$(-\infty, \phi_3)$ and $(\phi_4, +\infty)$, and concave on
$(\phi_3,\phi_4)$; $F$ has local minima at $\phi_1$ and 
$\phi_2$ with $F(\phi_1) = F (\phi_2) = 0$, and $F$ has 
a local maximum at $\phi = 0$. 
\end{remark}

\subsection{The Cahn--Hilliard energy}

Equations of form \eqref{ch} are based on the energy
functional
\begin{equation} \label{ch_energy}
    E(\phi) := \int_{-L}^{+L} F (\phi) + \frac{\kappa}{2} \phi_x^2 dx,
\end{equation}
introduced for this context in \cite{CH58} (see also \cite{GNRW03, KO02}). 
In particular, \eqref{ch} can be expressed as a conservation law, 
\begin{equation*}
\phi_t + J_x = 0,
\end{equation*}
with flux 
\begin{equation*}
J = - M (\phi) \partial_x \frac{\delta E}{\delta \phi},
\end{equation*}
where $\frac{\delta E}{\delta \phi}$ denotes the usual variational 
gradient, and we have restricted the expressions from \cite{CH58} 
to one space dimension. A straightforward calculation shows that if $\phi$ evolves
according to \eqref{ch} then the energy $E (\phi)$ will generally 
dissipate as it approaches a global minimum value, corresponding with a 
stationary solution of \eqref{ch}. One important aspect of 
such solutions is the rate at which they coarsen from an 
initial near-homogeneous configuration to a distinctive
non-homogeneous configuration corresponding with an asymptotic
limit. In principle, we would like to gauge such coarsening by 
evolving some designated length scale $\ell (t)$, but in practice 
it is problematic to consistently assign such a scale to an arbitrary 
solution $\phi(x,t)$ of \eqref{ch}. Instead, it is often more convenient
to gauge coarsening by evolution of the
energy \eqref{ch_energy}, and one goal of the current analysis 
is to relate the evolution of $E(\phi (\cdot, t))$ to 
the evolution of a length scale $\ell (t)$ in a consistent manner. 

\subsection{Periodic solutions of Cahn--Hilliard equations}

For many phase separation processes, including the well-studied
process of spinodal decomposition, we expect 
$\phi (x,0) = \phi_0 (x)$ to be a small random perturbation of
a homogeneous state $\phi_h = \text{constant}$.  We can understand 
the initiation of phase separation by linearizing \eqref{ch} about 
this state ($\phi = \phi_h + v$) to obtain the linear perturbation equation
\begin{equation*} 
v_t = - M(\phi_h) \kappa v_{xxxx} + M(\phi_h) F''(\phi_h) v_{xx}.
\end{equation*}
If we look for solutions of the form $v(x,t) = e^{\lambda t + i\xi x}$ 
we obtain the dispersion relation 
\begin{equation*}
\lambda (\xi) = - \kappa M(\phi_h) \xi^4 - M(\phi_h) F''(\phi_h) \xi^2,
\end{equation*}
with leading eigenvalue 
\begin{equation} \label{Leaders}
\lambda_s = \frac{M(\phi_h) F''(\phi_h)^2}{4 \kappa} > 0; \quad
\text{at} \quad \xi_s = \sqrt{-\frac{F''(\phi_h)}{2 \kappa}},
\end{equation}
and associated period
\begin{equation} \label{spinodal-period}
p_s = 2\pi \sqrt{\frac{2\kappa}{- F''(\phi_h)}},    
\end{equation}
where we emphasize that under our assumptions on $F$,
$F''(\phi_h) < 0$.
Accordingly, we expect solutions of \eqref{ch}, initialized
by small random perturbations of $\phi_h$, to rapidly evolve 
toward a periodic solution with period $p_s$.  Indeed, for 
the case of \eqref{ch} posed on a bounded domain in $\mathbb{R}$,
this expectation has been rigorously verified by Grant 
\cite{Grant91,Grant93}. It is natural to view these initial 
dynamics as culminating once solutions are nearly periodic 
with period $p_s$, and we refer to the dynamics up to this 
time as the {\it spinodal phase} of the process. Correspondingly,
we refer to $p_s$ as the {\it spinodal period}. 

At the end of the spinodal phase, solutions will generally be near a stationary 
periodic solution with period $p_s$, and in order to understand
the next phase of the dynamics, we consider the behavior of 
solutions initialized as perturbations of stationary 
periodic solutions. As a starting point for this, we have 
from \cite{H09, H11} that there exists a continuum 
of such periodic solutions $\bar{\phi} (x)$ to \eqref{ch} in the 
following sense: if $\phi_1$ and $\phi_2$ denote the binodal values
and $\phi_{\min}$ and $\phi_{\max}$ are any values so that 
$\phi_1 < \phi_{\min} < \phi_{\max} < \phi_2$ with additionally
\begin{equation*}
F'(\phi_{\min}) > \frac{F(\phi_{\max}) - F(\phi_{\min})}{\phi_{\max} - \phi_{\min}} > F'(\phi_{\max}),
\end{equation*}  
then there exists a periodic solution to \eqref{ch}
with minimum value $\phi_{\min}$ and maximum value $\phi_{\max}$
(see Theorem 1.5 in \cite{H09}). Moreover (again from Theorem 
1.5 in \cite{H09}), we have that if 
$F$ is as described in Remark \ref{F-remark}, and if 
additionally $F$ is an even function (such as \eqref{quarticF}), 
then for every 
amplitude $a \in (0, \phi_2)$ there exists precisely one
(up to translation) periodic stationary solution with
amplitude $a$. More precisely, this solution, denoted here
$\bar{\phi} (x; a)$,  satisfies the relation
\begin{equation*}
- \kappa \bar{\phi}_{xx} + F'(\bar{\phi}) = 0 
\implies (\bar{\phi}_x)^2 = \frac{2}{\kappa} (F(\bar{\phi}) - F(a)).
\end{equation*}
If we select the shift so that $\bar{\phi} (0; a) = 0$, we find
the integral relation
\begin{equation} \label{integralu}
\int_0^{\bar{\phi} (x;a)} \frac{dy}{\sqrt{\frac{2}{\kappa} (F(y) - F(a))}} = x,
\end{equation}  
from which we see immediately (by symmetry) that $\bar{\phi}(x;a)$ 
has period
\begin{equation} \label{a-to-p}
p(a) = 4 \int_0^{a} \frac{dy}{\sqrt{\frac{2}{\kappa} (F(y) - F(a))}}.
\end{equation}

Returning briefly to the initiation of dynamics, we can use 
\eqref{a-to-p} to compute the minimum possible period, obtained
as the limit of $p(a)$ as $a$ tends to 0. For this calculation, 
we need only observe that for $a$ small and $y \in (0, a)$, 
we can Taylor expand $F(a)$ about $y$, and subsequently Taylor 
expand $F'(y)$ and $F''(y)$ about $0$ (and note
that $F'(0) = 0$) to write 
\begin{equation*}
    F(y) - F(a) = \frac{1}{2} F''(0) (y^2 - a^2) \Big(1 + \mathbf{O} (a)\Big).
\end{equation*}
We then have 
\begin{equation*}
    p (a) = 4 \sqrt{\frac{\kappa}{- F'' (0)}} \int_0^a \frac{1}{\sqrt{a^2 - y^2}} \Big(1 + \mathbf{O} (a) \Big) dy 
    = 2 \pi \sqrt{\frac{\kappa}{- F'' (0)}} + {\mathbf O} (a).
\end{equation*}
We see that the minimum period is 
\begin{equation} \label{minimim-period}
    p_{\min} := \lim_{a \to 0^+} p(a) 
    = 2 \pi \sqrt{\frac{\kappa}{- F'' (0)}}. 
\end{equation}

In the following proposition, we observe that the period $p(a)$
increases as the amplitude $a$ increases. In \cite{H11}, this 
result is stated without proof as equation (1.11),
and although the short proof is straightforward (as asserted in 
\cite{H11}), we have elected to include it in an appendix for 
completeness. (See Appendix \ref{proofs-appendix}.) 

\begin{proposition} \label{period-proposition}
Assume $F$ is as described in Remark \ref{F-remark}, and also 
that $F$ is an even function. Then for all $a \in (0, \phi_2)$ 
the period $p(a)$ specified in \eqref{a-to-p} satisfies 
\begin{equation} \label{period-derivative}
p'(a) = \frac{2 \sqrt{2 \kappa}}{\sqrt{F(0)-F(a)}} 
- \sqrt{2 \kappa} \int_0^{a} \frac{F'(y) - F'(a)}{(F(y) - F(a))^{3/2}} dy. 
\end{equation}
In addition, if $F'''(\phi) > 0$ for all $\phi \in (0, \phi_2)$ 
then $p'(a) > 0$ for all $a \in (0, \phi_2)$. 
\end{proposition}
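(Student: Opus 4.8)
The plan is to differentiate the period formula \eqref{a-to-p}, which I rewrite as
\[
p(a) = 2\sqrt{2\kappa}\int_0^{a}\frac{dy}{\sqrt{F(y)-F(a)}} .
\]
By Remark \ref{F-remark} the function $F$ is strictly decreasing on $(0,\phi_2)$, so $F(y)-F(a)>0$ for $0<y<a<\phi_2$ and $F(0)-F(a)>0$; moreover, since $F'(a)<0$, one has $F(y)-F(a) = |F'(a)|(a-y)\,(1+o(1))$ as $y\to a^-$, so the integrand is $O\bigl((a-y)^{-1/2}\bigr)$ near the endpoint and the integral converges. Because the singularity sits at the moving endpoint $y=a$, the first step is to fix the domain by substituting $y=au$, giving $p(a)=2\sqrt{2\kappa}\int_0^1 a\,(F(au)-F(a))^{-1/2}\,du$.

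The second step is to differentiate under the integral sign. Computing $\partial_a\bigl[a\,(F(au)-F(a))^{-1/2}\bigr]$ and using the expansion above, the differentiated integrand is $O\bigl((1-u)^{-1/2}\bigr)$ near $u=1$, locally uniformly in $a$ (the implied constant involving $\sup|F'|$, $\sup|F''|$, and a positive lower bound for $|F'|$ over compact subintervals of $(0,\phi_2)$); this integrable majorant justifies the interchange (and shows $p\in C^1(0,\phi_2)$). Changing variables back to $y=au$ and putting the two resulting pieces over the common denominator $(F(y)-F(a))^{3/2}$ gives
\[
p'(a) = \frac{\sqrt{2\kappa}}{a}\int_0^{a}\frac{N(y)}{(F(y)-F(a))^{3/2}}\,dy,
\qquad N(y):=2\bigl(F(y)-F(a)\bigr)+aF'(a)-yF'(y).
\]
To pass from this to \eqref{period-derivative}, write $N(y)=2(F(y)-F(a))-a\bigl(F'(y)-F'(a)\bigr)+(a-y)F'(y)$, use the identity $F'(y)(F(y)-F(a))^{-3/2}=-2\,\frac{d}{dy}(F(y)-F(a))^{-1/2}$, and integrate $\int_0^a (a-y)F'(y)(F(y)-F(a))^{-3/2}\,dy$ by parts: the boundary term at $y=a$ vanishes (the factor $a-y$ kills the $(a-y)^{-1/2}$ blow-up), the boundary term at $y=0$ yields $2a/\sqrt{F(0)-F(a)}$, and the leftover integral cancels the contribution of the $2(F(y)-F(a))$ summand. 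Multiplying through by $\sqrt{2\kappa}/a$ then produces exactly \eqref{period-derivative}.

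For the monotonicity statement I would argue directly from the displayed pre-integration-by-parts form, since $(F(y)-F(a))^{-3/2}>0$ on $(0,a)$ reduces everything to showing $N(y)>0$ there. A direct computation gives $N(a)=0$ and $N''(y)=-yF'''(y)$, which is strictly negative on $(0,a)$ under the hypothesis $F'''>0$, so $N$ is strictly concave on $[0,a]$. It remains to verify $N(0)>0$: set $h(a):=N(0)=2(F(0)-F(a))+aF'(a)$ and note $h(0)=0$, $h'(0)=-F'(0)=0$ (recall $F'(0)=0$), and $h''(a)=aF'''(a)>0$ for $a\in(0,\phi_2)$, so $h$ is positive there, i.e.\ $N(0)>0$. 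A strictly concave function on $[0,a]$ with $N(0)>0$ and $N(a)=0$ lies strictly above the (nonnegative) chord joining $(0,N(0))$ to $(a,0)$, so $N>0$ on $[0,a)$; hence the integrand is strictly positive and $p'(a)>0$.

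The one genuinely delicate point is the justification of differentiation under the integral sign, forced by the endpoint singularity — handled by the $y=au$ substitution together with a routine locally uniform domination. Everything else is bookkeeping, though two spots deserve care: the boundary evaluations in the integration by parts, whose cancellations produce the clean form \eqref{period-derivative}, and the positivity step, which relies on the small observation that $N$ (in $y$) is concave while $h$ (in $a$) is convex and vanishes to second order at the origin.
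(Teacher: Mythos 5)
Your proof is correct and follows essentially the same route as the paper: the $y=au$ substitution to justify differentiation under the integral, the same numerator $N(y)=2\bigl(G(y)-G(a)\bigr)$ with $G(y)=F(y)-\tfrac{y}{2}F'(y)$, and the same splitting plus integration by parts (with the same boundary-term bookkeeping) to reach \eqref{period-derivative}. Your positivity step is a minor variant---the paper simply notes $N'(y)=F'(y)-yF''(y)$ satisfies $N'(0)=0$ and $N''=-yF'''<0$, so $N$ is decreasing with $N(a)=0$, which is a bit shorter than your concavity-plus-chord argument---but both are valid.
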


Since $p$ is monotonically increasing as a function of amplitude, we can 
uniquely specify the spinodal amplitude $a_s$ so that 
$p(a_s) = p_s$. Precisely, we obtain the relation 
\begin{equation*}
\frac{2 \pi \sqrt{2 \kappa}}{\sqrt{- F''(0)}}
= 4 \sqrt{\kappa} \int_0^{a_s} \frac{dy}{\sqrt{2 (F(y) - F(a_s))}},
\end{equation*}
and since $\sqrt{\kappa}$ can be divided out of both sides, 
we see that $a_s$ does not depend on $\kappa$.

Using \eqref{integralu}, we can identify the unique (up to shift, 
selected by $\bar{\phi} (0;a_s) = 0$)
periodic solution of \eqref{ch} with amplitude $a_s$, namely
$\bar{\phi} (x; a_s)$, and subsequently we define the spinodal 
energy $E_s$ as the energy associated with this periodic solution,
\begin{equation} \label{spinodal-energy}
    E_s := E(\bar{\phi} (\cdot; a_s)).
\end{equation} 
For the long-time models discussed below, we will typically 
think of initiating the dynamics once the energy has 
reduced to the spinodal value. 

Although the energy function $E (\phi (\cdot, t))$ can 
achieve any value attainable via functions $\phi (x, t)$
in its domain, the dynamics we have in mind involve 
solutions with energies less than the energy achieved 
by the homogeneous configuration $\phi_{0} \equiv 0$.
For a given interval $[-L, +L]$, this is easily computed 
to be 
\begin{equation} \label{maximum-energy}
    E_{\max} := E (0) = \int_{-L}^{+L} F(0) dx
    = 2LF(0). 
\end{equation}
As time increases, energies will decrease as solutions 
approach either a kink or anti-kink solution, respectively
$K(x)$ or $K(-x)$, where 
\begin{equation*}
    - \kappa K'' + F' (K) = 0,
    \quad \forall\, x \in \mathbb{R},
\end{equation*}
with also 
\begin{equation*}
    \lim_{x \to -\infty} K(x) = \phi_1,
    \quad    \lim_{x \to +\infty} K(x) = \phi_2.
\end{equation*}
(See \cite{H09} for existence of such solutions, and 
\cite{H07} for asymptotic stability; we note that according 
to convention (see, e.g., \cite{EB1996}), 
a {\it kink} solution is monotonically 
increasing, while an {\it anti-kink} solution is 
monotonically decreasing.) These solutions
provide us with a lower bound on the 
energy 
\begin{equation}
    E_{\min} := \int_{-L}^{+L}
    F(K (x)) + \frac{\kappa}{2} |K'(x)|^2 dx,
\end{equation}
which will be computed explicitly in Proposition 
\ref{specific-F-proposition} just below. 

For specific implementations of our approach, we will use 
the family of bulk free energy densities specified in 
\eqref{quarticF}. Many of the preceding relations can 
be made explicit in this case, and for convenient reference
we summarize these in Proposition \ref{specific-F-proposition}
just below. Several of the statements in 
this proposition are taken or adapted from the references 
\cite{H07,H09,H11}, and others are straightforward 
calculations, so the short proof is relegated to 
Appendix \ref{proofs-appendix}.

\begin{proposition} \label{specific-F-proposition}
    For \eqref{ch}, let $M$ satisfy the assumptions in 
    {\bf (A)}, and let $F$ be as in \eqref{quarticF}.
    Then the following hold:

    \medskip
    \noindent
    (i) {\bf Minimum period}. The minimum period computed in \eqref{minimim-period} 
    is $p_{\min} = 2 \pi \sqrt{\kappa/\beta}$, and the 
    associated energy computed in \eqref{maximum-energy}
    is $E_{\max} = L\beta^2/(2 \alpha)$. 

    \medskip
    \noindent
    (ii) {\bf Spinodal period}. The spinodal period computed 
    in \eqref{spinodal-period} is $p_s = 2 \pi \sqrt{2 \kappa/\beta}$.

    \medskip
    \noindent
    (iii) {\bf Periodic solutions}. For each $a \in (0, \sqrt{\beta/\alpha})$,
    the periodic solution $\bar{\phi} (x; a)$ specified in 
    \eqref{integralu} can be expressed as a Jacobi elliptic function 
    \begin{equation} \label{jacobiellipticu}
    \bar{\phi} (x; a) = a \operatorname{sn} \left(\sqrt{\frac{-2(F(a) - F(0))}{\kappa}} \frac{x}{a}, k\right),
    \end{equation}
    where 
    \begin{equation} \label{kdefined}
    k = \sqrt{- \frac{\alpha a^4}{4 (F(a) - F(0))}}.
    \end{equation}
  
    \medskip
    \noindent
    (iv) {\bf Kink solutions and the minimum energy}. The binodal values for $F$ are $\pm \sqrt{\beta/\alpha}$, 
    and there exists a unique (up to translation) kink solution
    of \eqref{ch}, 
    \begin{equation} \label{kink-solution}
        K(x) = \sqrt{\frac{\beta}{\alpha}} \tanh \Big(\sqrt{\frac{\beta}{2 \kappa}} x \Big).
    \end{equation}
    The energy associated with this kink solution on $[-L, +L]$ 
    is 
    \begin{equation}\label{E_kink}
        E_{\min} := E(K(x)) = \int_{-L}^{+L} F (K (x)) + \frac{\kappa}{2} K'(x)^2 dx
        = \sqrt{2 \kappa \alpha} K (L) \Big(\frac{\beta}{\alpha} - \frac{K (L)^2}{3} \Big),   
    \end{equation}
    and it follows that 
    \begin{equation*}
     E_{\min}^{\infty} := \lim_{L \to \infty} E_{\min}
     = \frac{2}{3} \frac{\beta^2}{\alpha} \sqrt{\frac{2 \kappa}{\beta}}.   
    \end{equation*}
\end{proposition}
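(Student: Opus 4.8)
The plan is to dispatch the four parts in sequence, with (i) and (ii) being pure substitution and (iii), (iv) resting on the first-order reductions already recorded in the excerpt. Throughout, write $F(\phi) = \tfrac{\alpha}{4}\phi^4 - \tfrac{\beta}{2}\phi^2 + \tfrac{\beta^2}{4\alpha}$, so that $F'(\phi) = \alpha\phi^3 - \beta\phi$, $F''(\phi) = 3\alpha\phi^2 - \beta$; in particular $F''(0) = -\beta$ and $F(0) = \beta^2/(4\alpha)$. For (i), substituting $F''(0) = -\beta$ into \eqref{minimim-period} gives $p_{\min} = 2\pi\sqrt{\kappa/\beta}$, and substituting $F(0)$ into \eqref{maximum-energy} gives $E_{\max} = 2LF(0) = L\beta^2/(2\alpha)$. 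For (ii), substituting $F''(0) = -\beta$ into \eqref{spinodal-period} gives $p_s = 2\pi\sqrt{2\kappa/\beta}$.

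For (iii), I would start from the first integral $(\bar{\phi}_x)^2 = \tfrac{2}{\kappa}(F(\bar{\phi}) - F(a))$ recorded just above \eqref{integralu} and verify that the proposed $\bar{\phi}(x;a) = a\operatorname{sn}(bx/a,k)$, with $b := \sqrt{-2(F(a)-F(0))/\kappa}$ and $k$ as in \eqref{kdefined}, solves it. First I would note that on the admissible range $a \in (0,\sqrt{\beta/\alpha})$ one has $F(a) - F(0) = \tfrac{a^2}{4}(\alpha a^2 - 2\beta) < 0$, so $b$ and $k$ are real, and since $\alpha a^2 < \beta$ forces $2\beta - \alpha a^2 > \alpha a^2 > 0$ we get $k^2 = \alpha a^2/(2\beta - \alpha a^2) \in (0,1)$, as needed for $\operatorname{sn}$. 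Differentiating the ansatz (so $\bar{\phi}_x = b\,\operatorname{sn}'(bx/a,k)$) and using the identity $(\tfrac{d}{du}\operatorname{sn}(u,k))^2 = (1 - \operatorname{sn}^2)(1 - k^2\operatorname{sn}^2)$ rewrites $(\bar{\phi}_x)^2$ as $\tfrac{b^2}{a^4}(a^2 - \bar{\phi}^2)(a^2 - k^2\bar{\phi}^2)$, a quadratic in $\bar{\phi}^2$; matching its coefficients against those of $\tfrac{2}{\kappa}(F(\bar{\phi}) - F(a)) = \tfrac{\alpha}{2\kappa}\bar{\phi}^4 - \tfrac{\beta}{\kappa}\bar{\phi}^2 + \tfrac{2}{\kappa}(F(0)-F(a))$ reduces, once one notes the constant terms agree by the very definition of $b$, to the two identities $b^2 k^2 = \alpha a^4/(2\kappa)$ and $b^2(1+k^2) = \beta a^2/\kappa$, each of which follows from the explicit formulas $b^2 = a^2(2\beta-\alpha a^2)/(2\kappa)$ and $k^2 = \alpha a^2/(2\beta - \alpha a^2)$ after a line of algebra. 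It then remains to note that $\bar{\phi}(0;a) = a\operatorname{sn}(0,k) = 0$ fixes the shift consistently with \eqref{integralu}, that $\bar{\phi}_x(0;a) = b\,\operatorname{sn}'(0,k) = b > 0$ selects the increasing branch, and that $\operatorname{sn}(\cdot,k)$ is periodic with range $[-1,1]$, so $\bar{\phi}$ is a genuine periodic solution of amplitude $a$; the uniqueness (up to translation) statement in Theorem 1.5 of \cite{H09} then identifies it with the solution specified in \eqref{integralu}.

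For (iv), the binodal values are the nonzero critical points of $F$, i.e.\ the roots of $\alpha\phi^3 - \beta\phi = 0$ other than $0$, namely $\pm\sqrt{\beta/\alpha}$. For the kink, multiply $-\kappa K'' + F'(K) = 0$ by $K'$ and integrate to get $\tfrac{\kappa}{2}(K')^2 = F(K) + C$; letting $x \to \pm\infty$ and using $K' \to 0$, $F(K) \to F(\pm\sqrt{\beta/\alpha}) = 0$ forces $C = 0$, so $(K')^2 = \tfrac{2}{\kappa}F(K) = \tfrac{\alpha}{2\kappa}(\tfrac{\beta}{\alpha} - K^2)^2$, and since $K$ is increasing with $K^2 < \beta/\alpha$ on $\mathbb{R}$ we get $K' = \sqrt{\tfrac{\alpha}{2\kappa}}(\tfrac{\beta}{\alpha} - K^2)$. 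Separating variables and integrating with the shift $K(0)=0$ yields $\operatorname{arctanh}(K/\sqrt{\beta/\alpha}) = \sqrt{\beta/(2\kappa)}\,x$, i.e.\ \eqref{kink-solution}. For the energy, the relation $F(K) = \tfrac{\kappa}{2}(K')^2$ gives $E_{\min} = \int_{-L}^{+L}\kappa (K')^2\,dx$; rewriting this as $\kappa\int_{K(-L)}^{K(L)} K'\,dK$ with $K' = \sqrt{\tfrac{\alpha}{2\kappa}}(\tfrac{\beta}{\alpha} - K^2)$, integrating the resulting polynomial, and using $K(-L) = -K(L)$ gives \eqref{E_kink} after simplifying $2\kappa\sqrt{\alpha/(2\kappa)} = \sqrt{2\kappa\alpha}$. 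Finally $K(L) \to \sqrt{\beta/\alpha}$ as $L \to \infty$, and substituting this limit into \eqref{E_kink} together with $\sqrt{2\kappa\alpha}\sqrt{\beta/\alpha} = \sqrt{2\kappa\beta}$ gives $E_{\min}^{\infty} = \tfrac{2}{3}\tfrac{\beta^2}{\alpha}\sqrt{2\kappa/\beta}$.

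There is no deep obstacle here, since every step is an explicit computation; the only place calling for genuine care is part (iii), where one must (a) keep the sign conventions straight—in particular that $F(a) - F(0) < 0$ on the admissible amplitude range, so that $b$ and $k$ are real and $k \in (0,1)$—and (b) confirm that both coefficient-matching identities hold simultaneously from the single pair of definitions of $b$ and $k$. A secondary point worth stating explicitly is that solving the first-order ODE is not by itself enough: one should also verify periodicity and the amplitude/shift normalization so that the constructed function is precisely the solution referenced in \eqref{integralu}–\eqref{a-to-p}, which is where the uniqueness result of \cite{H09} is invoked.
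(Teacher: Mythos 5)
Your proposal is correct, and parts (i), (ii), and (iv) proceed essentially as the paper does: direct substitution of $F''(0)=-\beta$, $F(0)=\beta^2/(4\alpha)$ for (i)--(ii), and for (iv) the first-order reduction $\tfrac{\kappa}{2}(K')^2=F(K)$ followed by the change of variables $y=K(x)$ (you integrate $\kappa (K')^2$, the paper integrates $2F(K)$; by equipartition these are the same computation). Where you genuinely diverge is part (iii): the paper \emph{derives} the Jacobi elliptic form by factoring $F(y)-F(a)=\bigl(\tfrac{\alpha}{4}y^2+\tfrac{F(a)-F(0)}{a^2}\bigr)(y^2-a^2)$ inside the integral relation \eqref{integralu}, rescaling, and substituting $y=a\sin\theta$ so that the left side becomes exactly the defining incomplete elliptic integral of $\operatorname{sn}$; you instead \emph{verify} the ansatz $a\operatorname{sn}(bx/a,k)$ against the first integral $(\bar{\phi}_x)^2=\tfrac{2}{\kappa}(F(\bar{\phi})-F(a))$ via the identity $(\operatorname{sn}')^2=(1-\operatorname{sn}^2)(1-k^2\operatorname{sn}^2)$ and coefficient matching, then invoke the uniqueness statement of Theorem 1.5 in \cite{H09} together with the normalization $\bar{\phi}(0;a)=0$, $\bar{\phi}_x(0;a)>0$ to identify it with the solution of \eqref{integralu}. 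Both routes are sound; your check that $k^2=\alpha a^2/(2\beta-\alpha a^2)\in(0,1)$ and the two matching identities for $b^2k^2$ and $b^2(1+k^2)$ are correct. The paper's derivation has the advantage of producing the formula (and the sign conventions) without needing an external uniqueness input, while your verification is shorter on algebra and makes the admissibility constraints on $a$, $b$, $k$ more explicit; your route also yields uniqueness of the kink in (iv) as a byproduct of the first-order reduction, which the paper simply cites.
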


\begin{remark} \label{periodic-solution-details}
For the periodic solution \eqref{jacobiellipticu}, 
$\operatorname{sn} (y;k)$ denotes the Jacobi elliptic function, 
defined so that 
\begin{equation*}
\operatorname{sn} (y;k) = \sin \theta; \quad \text{where} \quad
y = \int_0^{\theta} \frac{d \zeta}{\sqrt{1 - k^2 \sin^2 \zeta}}.
\end{equation*}
For implementations, we evaluate $\operatorname{sn} (y; k)$ with 
the MATLAB function {\it ellipj.m}.
%
%
The period in this case is 
\begin{equation} \label{jacobiperiod}
p (a) = \frac{4 a \sqrt{\kappa}}{\sqrt{-2(F(a) - F(0))}} \mathcal{K} (k),
\end{equation}
where $\mathcal{K}$ denotes the complete elliptic integral
\begin{equation*}
\mathcal{K}(k) = \int_0^1 \frac{ds}{\sqrt{(1-s^2)(1-k^2 s^2)}}.
\end{equation*}
For implementations, we evaluate $\mathcal{K} (k)$ with 
the MATLAB function {\it ellipke.m}. 
\end{remark}

\begin{remark}
    \textcolor{black}{
    We observe for consistency that the expression in (iii) for $\bar{\phi} (x; a)$, $a\in(0,\sqrt{\frac{\beta}{\alpha}})$, 
    matches the results when $a\to 0$ and $a\to \sqrt{\frac{\beta}{\alpha}}$. To see this, we first take $a\to 0$ and observe that
    \[
      \lim_{a\to0}\sqrt{\frac{-2(F(a)-F(0))}{a^2\kappa}} x = \sqrt{\frac\beta\kappa} x \quad \text{ and } \quad \lim_{a\to0} k = \lim_{a\to0} \sqrt{\frac{-\alpha a^4}{4(F(a)-F(0))}} = 0.
    \]
    As $\operatorname{sn}(u;0) = \sin u$, one can conclude that 
    \[
     \lim_{a\to0} \bar\phi(x;a) = \lim_{a\to0} a \sin (\sqrt{\frac\beta\kappa} x) =0.
    \]
    Next, taking $a\to \sqrt{\frac{\beta}{\alpha}}$ we find that
    \[
      \lim_{a\to \sqrt{\frac{\beta}{\alpha}}}\sqrt{\frac{-2(F(a)-F(0))}{a^2\kappa}} x = \sqrt{\frac{\beta}{2\kappa}} x \quad \text{and} \quad \lim_{a\to \sqrt{\frac{\beta}{\alpha}}} k = \lim_{a\to \sqrt{\frac{\beta}{\alpha}}} \sqrt{\frac{-\alpha a^4}{4(F(a)-F(0))}} = 1.
    \]
    As $\operatorname{sn}(u;1) = \tanh u$, we obtain
    \[
      \lim_{a\to \sqrt{\frac{\beta}{\alpha}}} \bar\phi(x;a) = \sqrt{\frac\beta\alpha} \operatorname{sn}(\sqrt{\frac{\beta}{2\kappa}} x,1) = \sqrt{\frac\beta\alpha} \tanh(\sqrt{\frac{\beta}{2\kappa}} x) = K(x).
    \]
    which gives the kink solution.
    }
\end{remark}


As a working example for quantitative analysis, we will take 
$M (\phi) \equiv 1$ and use $F$ as in \eqref{quarticF} with 
$\alpha = \beta = 1$, leaving only $\kappa$ to vary. The choice of constant mobility is made  because qualitative dynamics do not vary much as long as the mobility is
non-degenerate. As a 
baseline case, we will take $\kappa = 0.001$. Using these values,  
periodic solutions
with three different periods are depicted in Figure 
\ref{pwaves-figure}. 

\begin{figure}[ht] 
\begin{center}
\includegraphics[width=12cm,height=8.2cm]{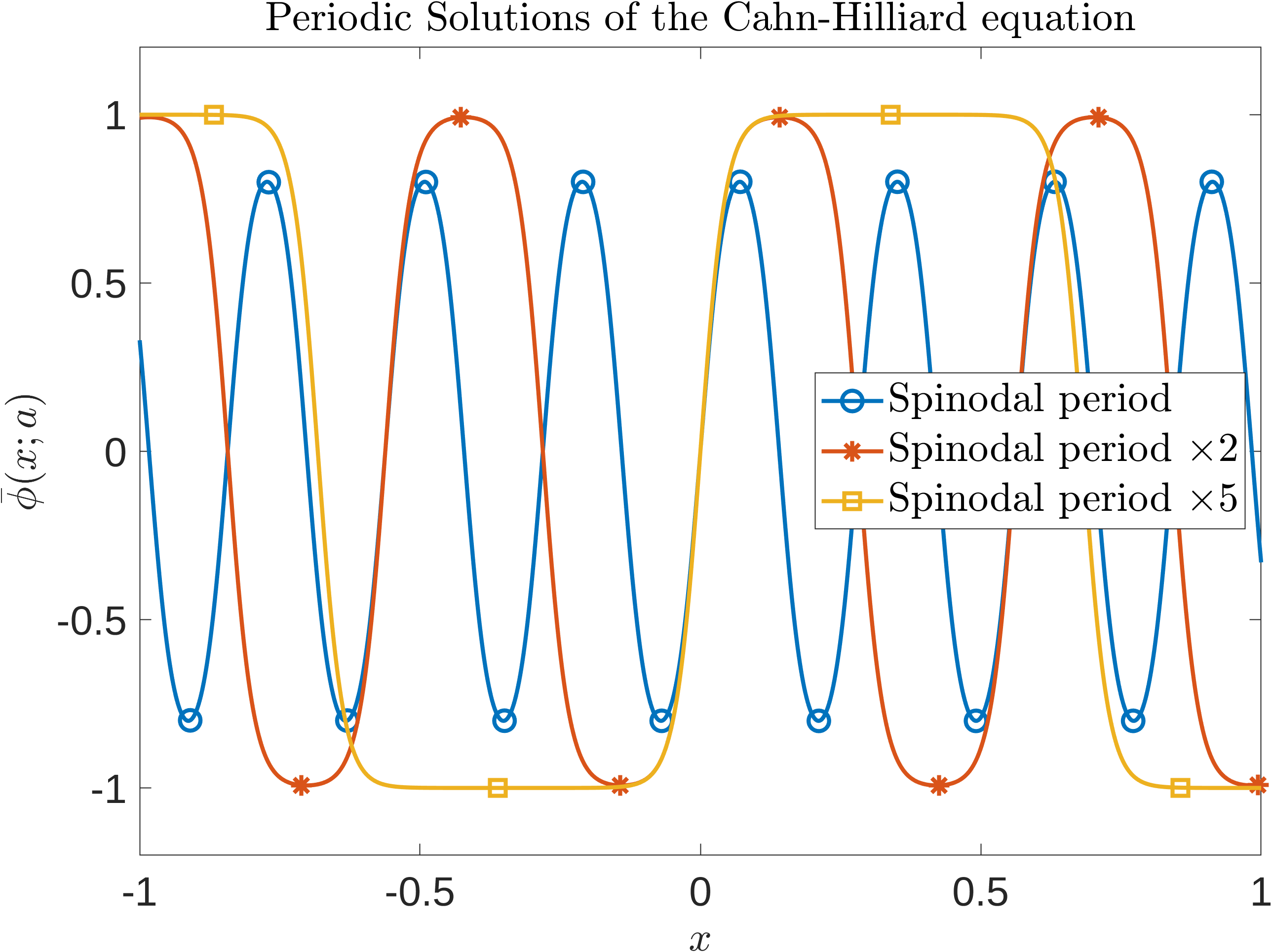}
\end{center}
\caption{Periodic solutions for the Cahn--Hilliard equation. Note that the domain $[-1,1]$ is depicted for visual clarity, but by Proposition~\ref{specific-F-proposition} (ii, the periods $p_s$ are approximately $0.2810$,  $0.5620$, and $1.4050$. \label{pwaves-figure}}
\end{figure}

\section{A New Measure of Coarseness} 
\label{MeasureofCoursening}

We return now to our goal set out previously of relating 
each energy $e \in (E_{\min}, E_{\max})$ with an associated 
length scale $\ell$. Since solutions to \eqref{ch} tend to 
be near periodic solutions during a substantial portion of the 
evolution, our strategy will be to identify 
with a given energy $e$ the period $p$ for which 
\begin{equation*}
    E (\bar{\phi} (\cdot; a (p))) = e. 
\end{equation*}
One caveat for this approach is that the function 
\begin{equation} \label{p-to-e}
    \mathcal{E} (p) 
    := E (\bar{\phi} (\cdot; a (p)))
    = \int_{-L}^{+L} F (\bar{\phi} (x; a(p))) 
    + \frac{\kappa}{2} \bar{\phi}_x (x; a(p))^2 dx
\end{equation}
is not invertible, a point we clarify in the next proposition. 

\begin{proposition} \label{EofPNotInvertible}
For \eqref{ch}, let $M$ satisfy the assumptions in 
{\bf (A)}, and let $F$ be as in \eqref{quarticF}.
Then the function $\mathcal{E} (p)$
defined in \eqref{p-to-e} is not invertible on $[p_{\min}, \infty)$. 
\end{proposition}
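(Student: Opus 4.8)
The plan is to show that $\mathcal{E}(p)$ fails to be injective by examining its behavior at the two ends of the amplitude range and showing the values overlap. Recall that $p(a)$ is a strictly increasing bijection from $(0,\sqrt{\beta/\alpha})$ onto $(p_{\min},\infty)$ by Proposition~\ref{period-proposition} together with \eqref{minimim-period} and the fact that $p(a)\to\infty$ as $a\to\sqrt{\beta/\alpha}$ (the period of the kink), so it is equivalent to study the composite energy as a function of amplitude, $\widetilde{\mathcal{E}}(a):=E(\bar\phi(\cdot;a))$, on $a\in(0,\sqrt{\beta/\alpha})$. I would first compute the two limiting values. As $a\to 0^+$, the periodic solution $\bar\phi(x;a)\to 0$ uniformly (by the Remark following Proposition~\ref{specific-F-proposition}), so $\widetilde{\mathcal{E}}(a)\to E(0)=E_{\max}=L\beta^2/(2\alpha)$. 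As $a\to\sqrt{\beta/\alpha}^-$, the period tends to infinity, so on the fixed domain $[-L,+L]$ the number of full oscillations of $\bar\phi(\cdot;a)$ per unit length goes to zero; one expects $\widetilde{\mathcal{E}}(a)\to E_{\min}$ (or, more carefully, to the energy of whatever kink/anti-kink configuration fits in $[-L,L]$ with the imposed periodic boundary conditions — effectively $E_{\min}$ as computed in \eqref{E_kink}, up to the $L\to\infty$ correction). Since $E_{\min}<E_{\max}$, the endpoint values are distinct.

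Next I would use continuity of $\widetilde{\mathcal{E}}$ on $(0,\sqrt{\beta/\alpha})$ — which follows from the explicit Jacobi-elliptic representation \eqref{jacobiellipticu}–\eqref{kdefined} and smooth dependence of $\operatorname{sn}$, $\mathcal K$ on the modulus $k$, with $k$ a smooth function of $a$ on the open interval — to conclude that $\widetilde{\mathcal{E}}$ is not monotone. The cleanest route: if $\widetilde{\mathcal{E}}$ were injective, then being continuous on an interval it would be strictly monotone, hence a bijection onto an interval with endpoints $E_{\min}$ and $E_{\max}$; composing with the inverse of $p\mapsto a$, $\mathcal{E}(p)$ would then be a continuous injection on $[p_{\min},\infty)$. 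To contradict this I would exhibit either (a) two explicit amplitudes $a_1\neq a_2$ with $\widetilde{\mathcal{E}}(a_1)=\widetilde{\mathcal{E}}(a_2)$, or — more robustly — (b) show that $\widetilde{\mathcal{E}}$ is \emph{not} monotone by finding an interior amplitude at which it exceeds (or falls below) both endpoint values, or by showing its derivative changes sign. Option (b) via a sign change in $\widetilde{\mathcal{E}}'(a)$ is probably the most self-contained: differentiate the energy in $a$, using $-\kappa\bar\phi_{xx}+F'(\bar\phi)=0$ to integrate by parts, and reduce $\widetilde{\mathcal{E}}'(a)$ to a boundary term plus an integral that can be evaluated (or estimated) via \eqref{integralu}; one then checks it is positive near one endpoint and negative near the other. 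For the specific quartic $F$ with $\alpha=\beta=1$ this is an explicit elliptic-integral computation.

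The main obstacle I anticipate is the endpoint analysis as $a\to\sqrt{\beta/\alpha}$, where the period diverges and one must be careful about how the periodic solution on the \emph{fixed} interval $[-L,+L]$ degenerates — in particular whether $2L$ is commensurate with $p(a)$ and what boundary configuration is actually realized. A clean way to sidestep this is to restrict attention to amplitudes $a$ for which $p(a)$ divides $2L$ evenly (a dense set of periods), or to argue on the torus of circumference $2L$ with the natural periodic extension; then $\widetilde{\mathcal{E}}(a)$ along that subsequence still limits to (a multiple of) the single-interface energy, and $E_{\min}<E_{\max}$ suffices. A secondary technical point is justifying differentiation under the integral and the integration by parts, which is routine given $F\in C^4$ and the smoothness of $\bar\phi(\cdot;a)$ in $a$ on the open interval. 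Once the two limiting values are pinned down as distinct and continuity is in hand, the non-invertibility is immediate from the intermediate value theorem applied to show $\widetilde{\mathcal{E}}$ — hence $\mathcal{E}$ — cannot be strictly monotone.
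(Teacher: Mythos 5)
Your reduction to the amplitude variable and the plan to differentiate the energy, use $-\kappa\bar\phi_{xx}+F'(\bar\phi)=0$, and integrate by parts down to boundary terms is exactly the paper's starting point, but the substance of the proof is missing. The endpoint analysis that occupies most of your proposal does not bear on invertibility: a continuous, strictly decreasing map with $\mathbb{E}(a)\to E_{\max}$ as $a\to 0^+$ and $\mathbb{E}(a)\to E_{\min}$ as $a\to\sqrt{\beta/\alpha}^-$ would be perfectly invertible, so the observation $E_{\min}<E_{\max}$ proves nothing (you half-acknowledge this, but then never execute options (a) or (b)). Moreover, the concrete sign check you propose --- ``positive near one endpoint and negative near the other'' --- is unreliable as stated: writing $\omega=\sqrt{\beta/\kappa}$, a small-$a$ expansion of the boundary-term formula gives $\mathbb{E}'(a)\approx \kappa\,\omega\, a\,\sin(2\omega L)$, so the sign near $a=0$ depends on how $L$ sits relative to the near-minimal period and is not of one fixed sign for all parameter choices; asserting it without computation is a genuine gap, since determining these signs \emph{is} the proof.

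What the paper actually does, and what your sketch lacks, is the mechanism producing the sign change at interior amplitudes. Using the explicit Jacobi form \eqref{periodic-wave} one finds $\bar\phi_a=\tfrac1a\bar\phi-\tfrac{ax}{2\beta/\alpha-a^2}\bar\phi_x$, hence by oddness
\begin{equation*}
\mathbb{E}'(a)=\frac{2\kappa}{a}\,\bar\phi_x(L;a)\Big(\bar\phi(L;a)-\frac{a^2L}{\tfrac{2\beta}{\alpha}-a^2}\,\bar\phi_x(L;a)\Big).
\end{equation*}
As $a$ increases the boundary trace $\bar\phi(L;a)$ oscillates (because the period sweeps through values commensurate with $2L$): at an amplitude where $\bar\phi(L;a)=0$ one computes $\mathbb{E}'(a)=-a^3\alpha L<0$, while for slightly larger $a$, just after $\bar\phi_x(L;a)$ passes through zero, the same formula forces $\mathbb{E}'(a)>0$. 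This interior oscillation argument --- not a comparison of endpoint limits --- is what shows $\mathbb{E}$, and hence $\mathcal{E}(p)$, fails to be monotone and therefore is not invertible. To repair your proposal you would need to carry out precisely this boundary-trace sign analysis (or exhibit two explicit amplitudes with equal energy), rather than rely on the unverified endpoint-sign claim.
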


\begin{proof}
    First, since the period $p$ depends monotonically on the 
    amplitude $a$, we can work with the map 
\begin{equation} \label{energy-period}
    \mathbb{E} (a) := E (\bar{\phi} (\cdot; a))
    = \int_{-L}^{+L} F (\bar{\phi} (x; a)) + \frac{\kappa}{2} \bar{\phi}_x (x; a)^2 dx.
\end{equation}
We will prove the proposition by showing that $\mathbb{E}' (a)$ does not 
have a fixed sign. Upon differentiating $\mathbb{E}$ in $a$, we 
obtain 
\begin{equation} \label{energy-period-derivative}
\begin{aligned}
    \frac{d\mathbb{E}}{da} (a) 
    &= \int_{-L}^{+L} F' (\bar{\phi} (x; a)) \bar{\phi}_a (x; a) + \kappa \bar{\phi}_x (x; a) \bar{\phi}_{x a} (x; a) dx \\
    &\overset{\textrm{parts}}{=} \int_{-L}^{+L} F' (\bar{\phi} (x; a)) \bar{\phi}_a (x; a) - \kappa \bar{\phi}_{xx} (x; a) \bar{\phi}_{a} (x; a) dx \\
    &\quad \quad + \kappa \bar{\phi}_x (L; a) \bar{\phi}_{a} (L; a) - \kappa \bar{\phi}_x (-L; a) \bar{\phi}_{a} (-L; a) \\ 
    &= \kappa \bar{\phi}_x (L; a) \bar{\phi}_{a} (L; a) - \kappa \bar{\phi}_x (-L; a) \bar{\phi}_{a} (-L; a),
\end{aligned}    
\end{equation}
where the final equality follows because $- \kappa \bar{\phi}_{xx} (x; a) + F' (\bar{\phi} (x; a)) = 0$ for a periodic 
wave $\bar{\phi} (x; a)$. 

In order to understand the derivative $\bar{\phi}_a (x; a)$, we recall the explicit form for 
$\bar{\phi} (x; a)$, 
\begin{equation} \label{periodic-wave}
    \bar{\phi} (x; a) = a \operatorname{sn} (h(a)x; k), 
    \quad h(a) = \frac{1}{a} \sqrt{\frac{-2 (F(a) - F(0))}{\kappa}}.
\end{equation}
With $F$ as specified in \eqref{quarticF}, we find that 
\begin{equation*}
    F(a) - F(0) = a^2 (\frac{\alpha}{4} a^2 - \frac{\beta}{2}),
\end{equation*}
so 
\begin{equation*}
    h(a) = \sqrt{\frac{\alpha}{2\kappa}} \sqrt{\frac{2\beta}{\alpha} - a^2},
    \quad h'(a) = -\frac{\sqrt{\frac{\alpha}{2\kappa}}a}{\sqrt{\frac{2\beta}{\alpha} - a^2}}.
\end{equation*}
Now, 
\begin{equation*}
    \bar{\phi}_a (x; a) = \operatorname{sn} (h(a)x; k) + a \operatorname{sn}' (h(a)x; k) h'(a) x,
\end{equation*}
and likewise 
\begin{equation*}
    \bar{\phi}_x (x; a) = a \operatorname{sn}' (h(a)x; k) h(a),
\end{equation*}
from which we can write 
\begin{equation*}
    \bar{\phi}_a (x; a) = \frac{1}{a} \bar{\phi} (x; a) 
    + \frac{h'(a)x}{h(a)} \bar{\phi}_x (x; a)
    = \frac{1}{a} \bar{\phi} (x; a) - \frac{ax}{\frac{2\beta}{\alpha} - a^2} \bar{\phi}_x (x;a).
\end{equation*}

From these considerations, we see that 
\begin{equation*}
    \bar{\phi}_x (x; a) \bar{\phi}_a (x; a)
    = \frac{1}{a} \bar{\phi} (x; a) \bar{\phi}_x (x; a) 
    - \frac{ax}{\frac{2\beta}{\alpha} - a^2} \bar{\phi}_x (x;a)^2.
\end{equation*}
Since $\bar{\phi} (x; a)$ is an odd function (because $\operatorname{sn} (x;k)$ is odd), we 
see that the product $\bar{\phi}_x (x; a) \bar{\phi}_a (x; a)$ is an odd function, 
and from this we can conclude from \eqref{energy-period-derivative} that 
\begin{align} \label{energy-amplitude-derivative2}
    \frac{d\mathbb{E}}{da} (a) 
    & = 2 \kappa \bar{\phi}_x (L; a) \bar{\phi}_{a} (L; a)
    = \frac{2 \kappa}{a} \bar{\phi} (L; a) \bar{\phi}_x (L; a) 
    - \frac{2 \kappa aL}{\frac{2\beta}{\alpha} - a^2} \bar{\phi}_x (L;a)^2 \nonumber\\
    &= \frac{2 \kappa}{a} \bar{\phi}_x (L; a) \Big( \bar{\phi} (L; a) 
    - \frac{a^2L}{\frac{2\beta}{\alpha} - a^2} \bar{\phi}_x (L;a)\Big).
\end{align}
Recalling that our goal is to understand the sign of $\mathbb{E}' (a)$,
as $a$ varies, we begin by considering a configuration in which $\bar{\phi} (L; a) = 0$.
In this case, 
\begin{align*}
     \frac{d\mathbb{E}}{da} (a) 
     &= - \frac{2 \kappa aL}{\frac{2\beta}{\alpha} - a^2} \bar{\phi}_x (L;a)^2
     = - \frac{2 \kappa aL}{\frac{2\beta}{\alpha} - a^2} \frac{2}{\kappa}(F (0) - F(a)) \\
     &= - \frac{4 aL}{\frac{2\beta}{\alpha} - a^2} \frac{\alpha a^2}{4} \left(\frac{2\beta}{\alpha} - a^2\right)
     = - a^3 \alpha L < 0.
\end{align*}

With this configuration, we have either $\bar{\phi}_x (L; a) < 0$ or $\bar{\phi}_x (L; a) > 0$,
and for specificity we will focus on the former case. If we now think about increasing $a$ a small
amount, we know from the monotonic dependence of $p$ on $a$ that the period $p$ will increase 
a small amount and we will have $\bar{\phi} (L; a) > 0$. As the amplitude $a$ continues to 
increase the value of $\bar{\phi} (L; a)$ will increase, and correspondingly the value of 
$|\bar{\phi}_x (L; a)|$ will decrease. Nonetheless, for $\bar{\phi}_x (L; a) < 0$ and 
$\bar{\phi}(L;a) > 0$ we will continue to have $\mathbb{E}' (a) < 0$ until $\bar{\phi} (L;a)$
achieves its maximum value $\bar{\phi} (L;a) = a$, at which point $\mathbb{E}' (a) = 0$ (because 
$\bar{\phi}_x (L;a) = 0$).
As $a$ increases further, we will have $\bar{\phi}_x (L;a) > 0$, with $|\bar{\phi}_x (L; a)|$ 
sufficiently small so that 
\begin{equation*}
    \bar{\phi} (L; a) 
    - \frac{a^2L}{\frac{2\beta}{\alpha} - a^2} \bar{\phi}_x (L;a) > 0.
\end{equation*}
Since we are now in the setting with $\bar{\phi}_x (L; a) > 0$, we see from 
\eqref{energy-amplitude-derivative2} that we will have $\mathbb{E}' (a) > 0$. In this way, 
we conclude that $\mathbb{E} (a)$ is not monotonically decreasing as $a$ increases. 
Due to the monotonic dependence of $a$ on $p$, we can additionally conclude that 
the energy map $\mathcal{E} (p)$ is not monotonic in $p$, and so is not invertible 
for all values of $p$. 
\end{proof}

According to Proposition \ref{EofPNotInvertible}, we cannot invert 
$\mathcal{E} (p)$, but we will see below that the pseudoinverse
of $\mathcal{E} (p)$ nonetheless provides an effective measure of 
coarsening. 

\begin{definition} \label{pseudoinverse}
    Given a fixed energy $e \in (E_{\min}, E_{\max}]$, we define the associated
    period $p$ to be the pseudoinverse 
    \begin{equation} \label{pseudoinverse_eq}
        p = \inf \{\tilde p>0: \mathcal{E} (\tilde p) \le e\}.
    \end{equation}
\end{definition}

To better understand why the pseudoinverse works well in this case, 
we observe that $\mathcal{E} (p)$ is a mostly 
decreasing function of $p$ except on some small intervals on which $\mathcal{E}' (p)$ is close
to 0 (see, e.g., Figure \ref{EofPzoomed}). In Figure \ref{energy-period-figure}, we plot $\mathcal{E} (p)$
in the case $\alpha = \beta = 1$, $\kappa = 0.001$, and $L = 1$. We see
from this plot that as $p$ increases, sharp gradients with $\mathcal{E}' (p) < 0$
alternate with plateaus where $\mathcal{E}' (p) \cong 0$. The sharp 
gradients occur near values of $p$ for which $\bar{\phi} (L;a (p)) = 0$,
as in the proof of Proposition \ref{EofPNotInvertible}. Proposition
\ref{EofPPlateaus} addresses the nature of the plateaus. 

\begin{figure}[ht] 
\begin{center}
\includegraphics[width=12cm,height=8.2cm]{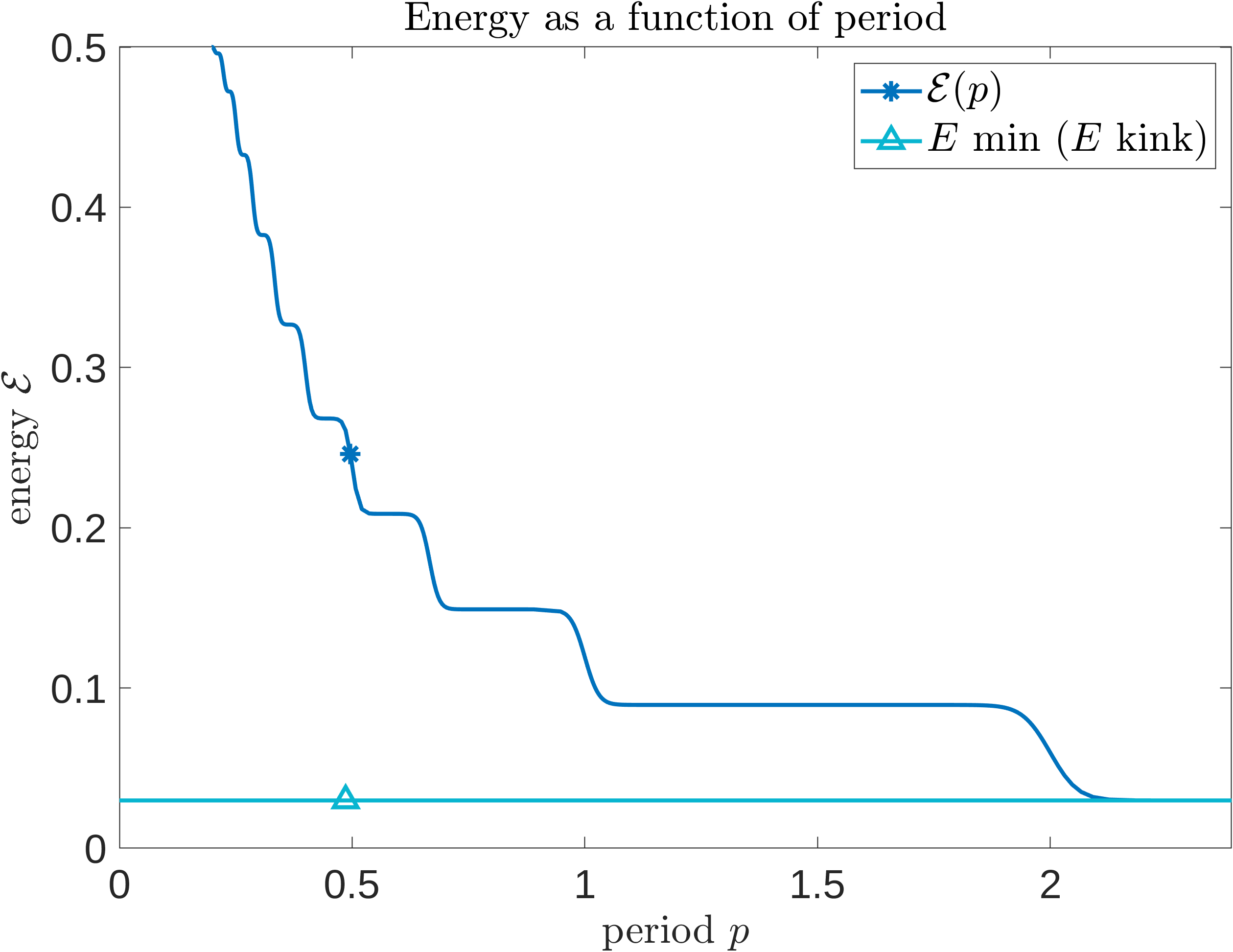}
\end{center}
\caption{Plot of $\mathcal{E} (p)$, computed with $\alpha = \beta = 1$, $\kappa = 0.001$, and $L = 1$. 
\label{energy-period-figure}}
\end{figure}

\begin{proposition} \label{EofPPlateaus}
For \eqref{ch}, let $M$ satisfy the assumptions in {\bf (A)}, and let $F$ be as in \eqref{quarticF}.
Then for any $p > p_{\min}$ for which $\mathcal{E}' (p) > 0$, we have the inequality 
\begin{equation*}
    \mathcal{E}' (p) 
    \le \frac{a^3 \sqrt{\alpha \kappa} (1+\delta)^{3/2} \delta^3 (\delta - 1)}{2L},
\end{equation*}
where $\delta = \sqrt{2\beta/(\alpha a^2)-1}$ and $a$ is the unique amplitude corresponding
with $p$. 
\end{proposition}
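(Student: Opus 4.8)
The plan is to pass to the amplitude variable $a$ and bound the numerator and denominator of $\mathcal E'(p)$ separately. Since $F'''(\phi)=6\alpha\phi>0$ on $(0,\sqrt{\beta/\alpha})$, Proposition~\ref{period-proposition} makes $a\mapsto p(a)$ strictly increasing, so writing $\mathbb E(a):=E(\bar\phi(\cdot;a))$ as in \eqref{energy-period} we have $\mathcal E(p)=\mathbb E(a(p))$ and $\mathcal E'(p)=\mathbb E'(a)/p'(a)$ with $p'(a)>0$; in particular $\mathcal E'(p)>0$ exactly when $\mathbb E'(a)>0$, and the estimate I obtain will in fact hold for every $p>p_{\min}$. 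Throughout I put $\delta:=\sqrt{2\beta/(\alpha a^2)-1}$, so that $\delta>1$ and $2\beta/\alpha-a^2=a^2\delta^2$; note also that by \eqref{kdefined} together with $F(a)-F(0)=-\tfrac14\alpha a^4\delta^2$ the modulus in the Jacobi form \eqref{jacobiellipticu} is $k=1/\delta$.

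For the numerator, I would rewrite \eqref{energy-amplitude-derivative2} using $a^2L/(2\beta/\alpha-a^2)=L/\delta^2$ as
\[
\mathbb E'(a)=\frac{2\kappa}{a}\,\bar\phi_x(L;a)\Bigl(\bar\phi(L;a)-\frac{L}{\delta^2}\,\bar\phi_x(L;a)\Bigr),
\]
which is a downward-opening quadratic in $\bar\phi_x(L;a)$. Completing the square (no case split on the sign of $\bar\phi_x(L;a)$ is needed) gives
\[
\mathbb E'(a)\le\frac{\kappa\delta^2\,\bar\phi(L;a)^2}{2aL}\le\frac{\kappa\delta^2 a}{2L},
\]
the last step because $|\bar\phi(L;a)|\le a$.

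For the denominator I would produce a matching lower bound on $p'(a)$. Differentiating the period \eqref{jacobiperiod}, substituting $-2(F(a)-F(0))=\tfrac12\alpha a^4\delta^2$ and $k=1/\delta$, and using the classical identity $\mathcal K'(k)=E(k)/\bigl(k(1-k^2)\bigr)-\mathcal K(k)/k$ (with $E$ the complete elliptic integral of the second kind), one is led after simplification to
\[
p'(a)=\frac{4}{a^2\delta}\sqrt{\frac{2\kappa}{\alpha}}\left(\frac{(1+k^2)E(k)}{1-k^2}-\mathcal K(k)\right)_{\!k=1/\delta}.
\]
The parenthesized quantity equals $k\mathcal K'(k)+k^2E(k)/(1-k^2)$, hence is at least $k^2/(1-k^2)=1/(\delta^2-1)$ because $\mathcal K'(k)\ge0$ and $E(k)\ge E(1)=1$ on $(0,1)$. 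Thus $p'(a)\ge 4\sqrt{2\kappa/\alpha}\,\bigl(a^2\delta(\delta-1)(\delta+1)\bigr)^{-1}$, and since $4\sqrt2\,(1+\delta)^{1/2}\ge1$ this gives
\[
p'(a)\ge\frac{\sqrt{\kappa/\alpha}}{a^2(1+\delta)^{3/2}\delta(\delta-1)}.
\]

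Dividing the numerator bound by the denominator bound, and using $\kappa/\sqrt{\kappa/\alpha}=\sqrt{\kappa\alpha}$, gives exactly $\mathcal E'(p)\le a^3\sqrt{\alpha\kappa}\,(1+\delta)^{3/2}\delta^3(\delta-1)/(2L)$, as claimed. I expect the one step with any content to be the lower bound on $p'(a)$: one must differentiate the period correctly (remembering the modulus depends on $a$) and invoke the elementary facts $\mathcal K'\ge0$ and $E\ge1$; everything else is the complete-the-square observation and bookkeeping. As an alternative for the denominator one could instead specialize \eqref{period-derivative} to the quartic $F$ and bound the resulting integral from above, but the elliptic-integral computation above is shorter.
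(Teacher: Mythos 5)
Your proposal is correct, and while it shares the paper's overall skeleton (pass to the amplitude variable, use $\mathcal{E}'(p)=\mathbb{E}'(a)/p'(a)$ with $p'(a)>0$ from Proposition \ref{period-proposition}, then bound the numerator above and the denominator below), both technical steps are executed differently. For the numerator, the paper uses the hypothesis $\mathcal{E}'(p)>0$ to extract the inequality \eqref{barphiInequality}, substitutes it back into \eqref{energy-amplitude-derivative2}, and treats the case $\bar{\phi}_x(L;a)<0$ by a separate (sketched) argument; your completing-the-square bound $\mathbb{E}'(a)\le \kappa\delta^2\bar{\phi}(L;a)^2/(2aL)\le \kappa\delta^2 a/(2L)$ needs no case split and no positivity hypothesis, and is in fact sharper by a factor of $4$ than the paper's $2\kappa a\delta^2/L$ (a factor you then deliberately give back via the crude step $4\sqrt{2}(1+\delta)^{1/2}\ge 1$ so as to land on the stated constant). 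For the denominator, the paper specializes the integral representation \eqref{pprime} of $p'(a)$ (from the proof of Proposition \ref{period-proposition}) to the quartic $F$, obtaining $p'(a)=\frac{4\sqrt{2\kappa}}{a^2\sqrt{\alpha}}\int_0^1\frac{1+z^2}{(1-z^2)^{1/2}(\delta^2-z^2)^{3/2}}dz$, and bounds the integrand elementarily to get \eqref{pprimeEstimate}; you instead differentiate the elliptic-integral form of the period \eqref{jacobiperiod}, using $k=1/\delta$ and the classical identity for $\mathcal{K}'(k)$, which I have checked gives exactly your closed form $p'(a)=\frac{4}{a^2\delta}\sqrt{\frac{2\kappa}{\alpha}}\bigl(\frac{(1+k^2)E(k)}{1-k^2}-\mathcal{K}(k)\bigr)$, and your lower bound via $\mathcal{K}'\ge 0$, $E\ge 1$ is valid and implies the paper's \eqref{pprimeEstimate} up to the constant noted above. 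The trade-off: the paper's route avoids elliptic-function identities entirely and reuses machinery already set up for Proposition \ref{period-proposition}, while yours produces an exact formula for $p'(a)$ (incidentally re-proving $p'>0$ for the quartic) and yields a statement valid for all $p>p_{\min}$, not only where $\mathcal{E}'(p)>0$.
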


\begin{proof}
Recalling \eqref{energy-amplitude-derivative2} from the proof of Proposition 
\ref{EofPNotInvertible}, we see that if $\bar{\phi}_x (L;a) > 0$ then 
we can only have $\mathbb{E}' (a) > 0$ if 
\begin{equation} \label{barphiInequality}
    \frac{a^2L}{\frac{2\beta}{\alpha} - a^2} \bar{\phi}_x (L;a) < \bar{\phi} (L; a), 
\end{equation}
so that 
\begin{equation*}
    \bar{\phi}_x (L;a)
    \le \frac{\frac{2\beta}{\alpha} - a^2}{a^2L} \bar{\phi} (L; a). 
\end{equation*}
If we substitute this inequality into \eqref{energy-amplitude-derivative2}, we
see that 
\begin{equation} \label{EprimeEstimate}
    \frac{d \mathbb{E}}{da} (a) 
    \le \frac{2 \kappa}{a} \bar{\phi}_x (L; a) \bar{\phi} (L; a)
    < \frac{2 \kappa}{a} \frac{\frac{2\beta}{\alpha} - a^2}{a^2L} \bar{\phi} (L; a)^2
    \le \frac{2 \kappa}{a} \frac{\frac{2\beta}{\alpha} - a^2}{L},
\end{equation}
where in obtaining the final inequality we have observed that $|\bar{\phi} (L;a)| \le a$.

In order to translate this into information about the size of $\mathcal{E}' (p)$, we 
observe the relation 
\begin{equation} \label{ChainRule}
    \mathcal{E}' (p) = \mathbb{E}'(a)/p'(a).
\end{equation}
From the proof of Proposition \ref{period-proposition} (see Appendix \ref{proofs-appendix}),
we know that 
\begin{equation} \label{pprime}
    p' (a) = 2\sqrt{2\kappa} \int_0^1 \frac{G(az) - G(a)}{(F(az) - F(a))^{3/2}} \,dz,
\end{equation}
where 
\begin{equation*}
    G (y) = F (y) - \frac{y}{2} F'(y),
\end{equation*}
and moreover noted in the same proof that for all $a \in (0, \sqrt{\beta/\alpha})$ we 
have $p' (a) > 0$ (under the assumptions of that proposition). For our choice 
of $F$ \eqref{quarticF}, we have the relations 
\begin{equation*}
    \begin{aligned}
        F(az) - F(a) &= \frac{\alpha a^4}{4} (1 - z^2) (\delta^2 - z^2), \\
        G(az) - G(a) &= \frac{\alpha a^4}{4} (1-z^2) (1+z^2),
    \end{aligned}
\end{equation*}
where for notational brevity we have set 
\begin{equation*}
    \delta^2 := \frac{2 \beta}{\alpha a^2} - 1,
\end{equation*}
which is greater than 1 for all $a \in (0, \sqrt{\frac{\beta}{\alpha}})$. Upon 
substituting these relations into \eqref{pprime}, we arrive at the 
relation 
\begin{equation*}
    \begin{aligned}
    p' (a) &= \frac{4\sqrt{2 \kappa}}{a^2 \sqrt{\alpha}} 
    \int_0^1 \frac{1+z^2}{(1-z^2)^{1/2} (\delta^2 - z^2)^{3/2}} \,dz \\
    &=  \frac{4\sqrt{2 \kappa}}{a^2 \sqrt{\alpha}} 
     \int_0^1 \frac{1+z^2}{[(1-z)(1+z)]^{1/2} [(\delta - z)(\delta + z)]^{3/2}} \,dz. 
    \end{aligned}
\end{equation*}
We are primarily interested in a lower bound on this quantity, 
\begin{equation*}
    \begin{aligned}
    p'(a) &\ge \frac{4\sqrt{2 \kappa}}{a^2 \sqrt{\alpha}} 
    \frac{1}{\sqrt{2} (1+\delta)^{3/2}}
    \int_0^1 \frac{1}{(1-z)^{1/2} (\delta - z)^{3/2}} \,dz \\
    &\ge \frac{4\sqrt{\kappa}}{a^2 \sqrt{\alpha} (1+\delta)^{3/2}} 
    \int_0^1 \frac{1}{(\delta - z)^{2}} \,dz, 
    \end{aligned}
\end{equation*}
where the second inequality follows immediately from the observation that $\delta > 1$. 
Integrating directly, we now arrive at the inequality 
\begin{equation} \label{pprimeEstimate}
    p' (a) \ge \frac{4\sqrt{\kappa}}{a^2 \sqrt{\alpha} (1+\delta)^{3/2}} 
    \Big(\frac{1}{\delta (\delta - 1)} \Big).
\end{equation}
The stated estimate on $\mathcal{E}' (p)$ now follows from \eqref{ChainRule}, 
\eqref{EprimeEstimate}, and \eqref{pprimeEstimate}. 

Finally, we recall that these calculations have been carried out 
under the assumption $\bar{\phi}_x (L;a) > 0$, and note that the case 
with $\bar{\phi}_x (L;a) < 0$ is similar.
\end{proof}

\begin{remark}
As expected, we see from Proposition \ref{EofPPlateaus} that as the amplitude $a$ tends toward the 
maximum amplitude $\sqrt{\beta/\alpha}$, $p'(a)$ tends toward $\infty$ (since $\delta$ 
tends toward 1). I.e., for late-stage coarsening, $\mathcal{E}' (p)$ is much smaller 
than $\mathbb{E}' (a)$. For intermediate values of $a$, $p' (a)$ is proportional to 
$\sqrt{\kappa}$, so that $\mathcal{E}' (p)$ is (also) proportional to $\sqrt{\kappa}$. 

Although we are primarily interested in intermediate- to late-stage coarsening, it is perhaps
interesting to note the behavior of $\mathcal{E}' (p)$ as $a$ tends toward 0 (so $p$ tends toward
$p_{\min}$). As noted previously, in the event that $\mathbb{E}' (a) > 0$, we have the inequality 
\begin{equation*}
    \mathbb{E}' (a) < \frac{2\kappa}{a} \bar{\phi}_x (L; a) \bar{\phi} (L; a).
\end{equation*}
Recalling \eqref{periodic-wave}, we see that 
\begin{equation*}
    |\bar{\phi}_x (L; a)| 
    \le a \sqrt{\frac{\beta}{\kappa}},
\end{equation*}
where we have noted that $|\operatorname{sn}' (x; k)| \le 1$ for all $x \in \mathbb{R}$
(see, e.g., \cite{NIST:DLMF}). 
In this way, we conclude that 
\begin{equation*}
    \mathbb{E}' (a) \le 2a \sqrt{\beta \kappa}.
\end{equation*}
We can also obtain an alternative lower bound on $p' (a)$,
\begin{equation*}
    \begin{aligned}
        p' (a) &= \frac{4\sqrt{2 \kappa}}{a^2 \sqrt{\alpha}} 
     \int_0^1 \frac{1+z^2}{(1-z^2)^{1/2} (\delta^2 - z^2)^{3/2}} dz \\
     &\ge \frac{4\sqrt{2 \kappa}}{a^2 \delta^3 \sqrt{\alpha}} \int_0^1 \frac{1}{\sqrt{1-z^2}} dz
     = \frac{2 \pi\sqrt{2 \kappa}}{a^2 \delta^3 \sqrt{\alpha}}.
    \end{aligned}
\end{equation*}
Combining these observations, we see that 
\begin{equation*}
    \mathcal{E}' (p) = \mathbb{E}' (a)/p'(a)
    \le 2a \sqrt{\beta \kappa} \frac{a^2 \delta^3 \sqrt{\alpha}}{2 \pi\sqrt{2 \kappa}}
    = \frac{\sqrt{\alpha \beta}}{\pi \sqrt{2}}\left(\frac{2\beta}{\alpha} - a^2\right)^{3/2}.
\end{equation*}
We conclude that $\mathcal{E}' (p)$ is bounded above as $p$ approaches $p_{\min}$, though 
no longer proportional to $\sqrt{\kappa}$. 
\end{remark}

We now precisely specify our measure of coarseness. 

\begin{definition} For any solution $\phi(x,t)$ to \eqref{ch} with energy 
$E (\phi (\cdot,t)) \in (E_{\min}, E_{\max}]$, we define the coarseness $\ell$ to be the 
period $p$ defined in \eqref{pseudoinverse_eq} with $e = E (\phi (\cdot,t))$.
\end{definition}

In specifying this notion of coarseness, our focus has been on identifying 
a measure that can be used in a consistent way from the onset of spinodal decomposition through
the late stages of coarsening. The energy \eqref{ch_energy} is often used in precisely this way, 
but it has the disadvantage that it does not measure actual coarseness (i.e., a characteristic
length describing the state of the alloy) at any time during the dynamics. Our approach on the 
other hand, has the advantages of energy during early-stage dynamics, and is also a genuine measure 
of coarseness during the later stages of the dynamics. This allows us to study the rate of 
coarsening dynamics throughout the phase-separation process, and better understand changes that 
occur as the process unfolds. We will work with this measure of coarseness throughout the remainder 
of the analysis.

As an illustration of the considerations discussed in this section, we consider again 
the specific case of \eqref{ch} with $\kappa = 0.001$, $M (\phi) \equiv 1$, and $F$ as in \eqref{quarticF} 
with $\alpha = 1$ and $\beta = 1$. Referring to Figure \ref{energy-period-figure}, we pick the 
plateau between about 0.29 and 0.32, and 
in Figure \ref{EofPzoomed}
we zoom 
in on this flat section to see that in fact there is a small interval of periods for 
which monotonicity is lost. 



\begin{figure}[ht] 
\includegraphics[width=1\textwidth]{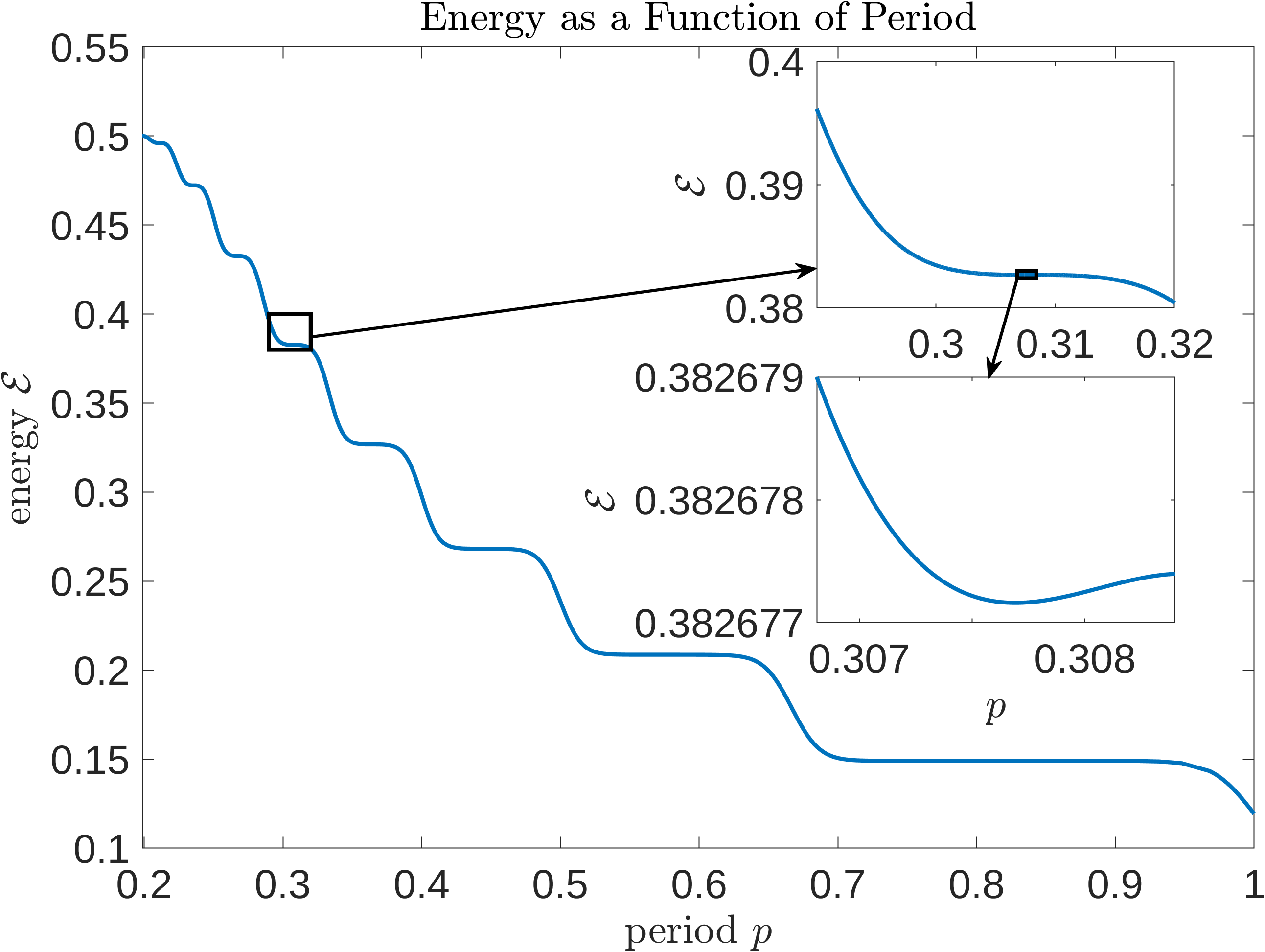}
\caption{\label{EofPzoomed} Plot of $\mathcal{E} (p)$, with zoom-ins at $p \in [0.29, 0.32]$ and $p \in [0.3068, 0.3084]$ inset.  Computed with $\alpha = \beta = 1$, $\kappa = 0.001$, and $L = 1$. We see that, as discussed in Section \ref{MeasureofCoursening}, $\mathcal{E} (p)$ can briefly lose
monotonicity.} 
\end{figure}



We end this section by briefly contrasting our measure of coarseness with 
the measure introduced by Kohn and Otto in \cite{KO02}, stated above as 
Definition \ref{Kohn-Otto-Definition}. In particular, in the one-dimensional 
setting, our measure has the following advantages: (1) we can assign a length scale to any 
function $\phi \in C^1 ([-L, +L])$ as long as $E (\phi) \in  (E_{\min}, E_{\max}]$; 
(2) since our measure of length is directly linked to solutions of 
\eqref{ch}, it serves as a more precise indicator of the extent to which 
the associated solution $\phi (x, t)$ has evolved toward its final 
asymptotic state; and (3) our measure is more readily computed (by \eqref{pseudoinverse_eq}).  


\section{Coarsening Rates Models}
\label{coarsening-rates-section}

In Section \ref{sec-computational-results}, we will compare coarsening rates for 
solutions of \eqref{ch} computed 
in three different ways: (1) by direct numerical 
integration of \eqref{ch}; (2) by a long-time coarsening 
model due to Langer \cite{L71}; and (3) by a
coarsening model due to one of the authors \cite{H11}. 
In preparation for that, we now provide background information 
on each of these approaches. 

{\bf Computation.} For our direct computations, we will
initialize the flow with random perturbations of the 
homogeneous configuration $\phi_0 (x) \equiv 0$. We 
then solve forward until the energy $E(\phi (x, t))$
reaches the spinodal energy $E_s$, and it is from this 
point that we compare energies computed in three 
different ways. 

{\bf Langer's relation.} 
In \cite{L71}, Langer employs a statistical development to capture thermal 
fluctuations driving phase separation, and arrives at a straightforward
equation for the evolution of a coarseness measure $\ell$ as a function
of time. In the current framework, we can interpret equation (6.26)
in \cite{L71} as an equation for the period $p$ as a function of 
time, 
\begin{equation} \label{langer_period}
p (t) = p_0 + \sqrt{\frac{2 \kappa}{\beta}} 
\ln \Big(1 + \frac{16 \beta^2 (t-t_0)}{\kappa} e^{-\frac{p_0}{\sqrt{2\kappa/\beta}}} \Big).
\end{equation}
Here, we take the period to be $p_0$ at time $t_0$.

\begin{remark} \label{Langer626remark}
Equation (6.26) in \cite{L71} is 
\begin{equation} \label{Langer626}
\ell (t) = \ell_0 + \frac{\xi}{2} 
\ln \Big(1 + \frac{32 t}{\tau_0} e^{- 2 \ell_0/\xi} \Big),     
\end{equation}
where $\ell (t) = p(t)/2$ (with $p(t)$ as in the current 
analysis; see Figure 4 in \cite{L71}). In obtaining 
\eqref{langer_period}, we have chosen Langer's constants
$\Gamma$, $a$, $d$, $\kappa_b$, and $T$ so that 
\begin{equation*}
    \frac{\Gamma a^{2+d}}{2 \kappa_b T} = 1,
\end{equation*}
in which case equation (3.4) in \cite{L71} becomes
(in the notation of \cite{L71})
\begin{equation*}
    \bar{\eta}_t = (- \epsilon_0 \xi_0^2 \bar{\eta}_{xx} + F' (\bar{\eta}))_{xx}.
\end{equation*}
In particular, we obtain our equation \eqref{ch} with 
$M(\phi) \equiv 1$ and $\kappa = \epsilon_0 \xi_0^2$. The 
combination $\epsilon_0 \xi_0^2$ always appears together in the 
development of \cite{L71}, and is replaced below with the constant  
$\kappa$.
(Here, we have denoted Boltzmann's constant $\kappa_b$ to 
distinguish it from our $\kappa$; Langer denotes it by 
$\kappa$.) In this case, $\tau_0 = 2 \kappa/\beta^2$ and $\xi = \sqrt{2\kappa/\beta}$,
so that \eqref{Langer626} becomes  
\begin{equation*} 
\ell (t) = \ell_0 + \sqrt{\frac{\kappa}{2 \beta}} 
\ln \Big(1 + \frac{16 \beta^2 t}{\kappa} e^{- 2 \ell_0/\sqrt{2\kappa/\beta}} \Big).    
\end{equation*}
Our \eqref{langer_period} is obtained by multiplying this last expression by 
$2$, replacing $2 \ell_0$ with $p_0$, and allowing the initial time 
to be $t_0 > 0$. (Langer takes the initial time to be $0$ by convention, 
initiating late-stage dynamics at $t = 0$.)
\end{remark}

{\bf The approach of \cite{H11}}. A drawback of Langer's approach 
in \cite{L71} is that approximations are made that require the 
coarsening process to be at an asymptotically late stage. In 
particular, Langer approximates late-stage steady-state periodic solutions 
by piecing together enriched regions with transitions taken from 
kink and antikink solutions. In \cite{H11}, this approach is refined
by the use of exact periodic solutions $\bar{\phi} (x; a)$ in place 
of the approximate solutions, with the further advantage that these 
periodic solutions can serve to approximate the dynamics as early as 
the spinodal time. Following this replacement of the approximate
solutions with exact solutions, the method of \cite{H11} follows
Langer's approach of linearizing and determining the coarsening 
rate from the eigenvalues of the resulting eigenvalue problem.
Before describing the model obtained in this way, we briefly summarize 
an efficient method for computing these eigenvalues. First, if 
\eqref{ch} (with $M \equiv 1$) is linearized
about $\bar{\phi} (x; a)$, with $\phi = \bar{\phi} + v$,
we obtain the perturbation equation 
\begin{equation*}
    v_t = (- \kappa v_{xx} + F'' (\bar{\phi})v)_{xx},
\end{equation*}
and the corresponding eigenvalue problem 
\begin{equation} \label{CH-EV-eqn}
    (- \kappa \psi'' + F''(\bar{\phi}) \psi)'' = \lambda \psi.
\end{equation}

As a starting point, we can explicitly compute the right-most
eigenvalue for the limiting case with amplitude $a = 0$, 
corresponding with $\bar{\phi} (x; 0) \equiv 0$. In this
case, \eqref{CH-EV-eqn} becomes 
\begin{equation*}
- \kappa \psi'''' - \beta \psi'' = \lambda \psi,
\end{equation*}
where we have observed that $F'' (0) = - \beta$. 
Any non-imaginary roots of the characteristic polynomial\footnote{Note that, even if one looks for eigenfunctions in the space of tempered distributions, after taking the Fourier transform, one finds that the support of $\widehat{\psi}$ is at most four points, corresponding to solutions of the characteristic polynomial. Hence, the only eigenfunctions are given by the characteristic polynomial method.} for this equation correspond to eigenfunctions which are unbounded as $|x|\rightarrow\infty$.   Moreover, we rule out repeated-root solutions, since these are necessarily unbounded on $\mathbb{R}$.  Hence, we look for eigenfunctions of the form
$\psi (x) = e^{i \xi x}$ for some 
$\xi \in \mathbb{R}$, which yields
\begin{equation*}
    - \kappa \xi^4 + \beta \xi^2 = \lambda.
\end{equation*}
We find that the maximum of $\lambda (\xi)$ 
occurs at $\xi = \pm \sqrt{\beta/(2\kappa)}$, with 
\begin{equation*}
    \lambda_{\max}  = \frac{\beta^2}{4\kappa}.
\end{equation*}
For the specific values $\kappa = 0.001$ and $\beta = 1$,
this is $\lambda_{\max} = 250$.

More generally, in \cite{H11} (which adapts the approach 
of \cite{Gardner1993, Gardner1997} to the current setting), 
Floquet theory is used to characterize the leading eigenvalue associated
with any periodic solution $\bar{\phi} (x; a)$. 
To understand how this works, we first express 
the eigenvalue problem \eqref{CH-EV-eqn}
as a first-order system, setting $y = (y_1, y_2, y_3, y_4)^T$,
with $y_j = \psi^{(j)}$, $j = 0, 1, 2, 3$. Then $y$ solves 
the system 
\begin{equation} \label{ev-ode}
    y' = \mathbb{A} (x; \lambda) y,
    \quad 
    \mathbb{A} (x; \lambda)
    = \begin{pmatrix}
        0 & 1 & 0 & 0 \\
        0 & 0 & 1 & 0 \\
        0 & 0 & 0 & 1 \\
        (b''(x) - \lambda)/\kappa & 2b'(x)/\kappa & b(x)/\kappa & 0
    \end{pmatrix},
\end{equation}
where $b(x) = F''(\bar{\phi} (x; a))$.
We can compute a fundamental matrix for \eqref{ev-ode},
\begin{equation*}
    \Phi' = \mathbb{A} (x; \lambda) \Phi,
    \quad \Phi (0; \lambda) = I_4,
\end{equation*}
and the monodromy matrix is defined to be $M(\lambda; p) := \Phi (p; \lambda)$.
The Evans function is 
\begin{equation}
    D(\lambda, \xi) 
    = \det (M(\lambda; p) - e^{i \xi p} I_4), 
\end{equation}
and in this setting $\lambda$ is an eigenvalue of \eqref{CH-EV-eqn}
if and only if $D (\lambda, \xi) = 0$ for some $\xi \in \mathbb{R}$.
(See \cite{Gardner1993, Gardner1997} for details on the 
Evans function for periodic solutions generally, and \cite{H09, H11} for 
specialization to the current setting.)

In practice, we can use this to compute leading eigenvalues in 
the following relatively efficient way. First, we know that 
for $a = 0$, with corresponding minimum period $p_{\min}$ 
(from \eqref{minimim-period}), the leading eigenvalue is 
$\lambda_{\max}  = \frac{\beta^2}{4\kappa}$. We now increment
the value $a$ by some small step $\Delta a$, and compute the 
corresponding period $p(a+\Delta a)$, using \eqref{a-to-p}.
The leading eigenvalue for $\bar{\phi} (x; a+ \Delta a)$ will
be just below the leading eigenvalue for $\bar{\phi} (x; a)$,
so we search for zeros of $D (\lambda, \xi)$ with values of 
$\lambda$ just below $\lambda_{\max}$. Repeating this process
iteratively we obtain the leading eigenvalues. 
For $\alpha = 1$, $\beta = 1$, and $\kappa = 0.001$, a plot of 
leading eigenvalues as a function of amplitude $a$ is depicted 
in Figure \ref{evplot1} for two different values of $\kappa$. 

\begin{figure}[ht] 
\begin{center}
\includegraphics[width=12cm,height=8.2cm]{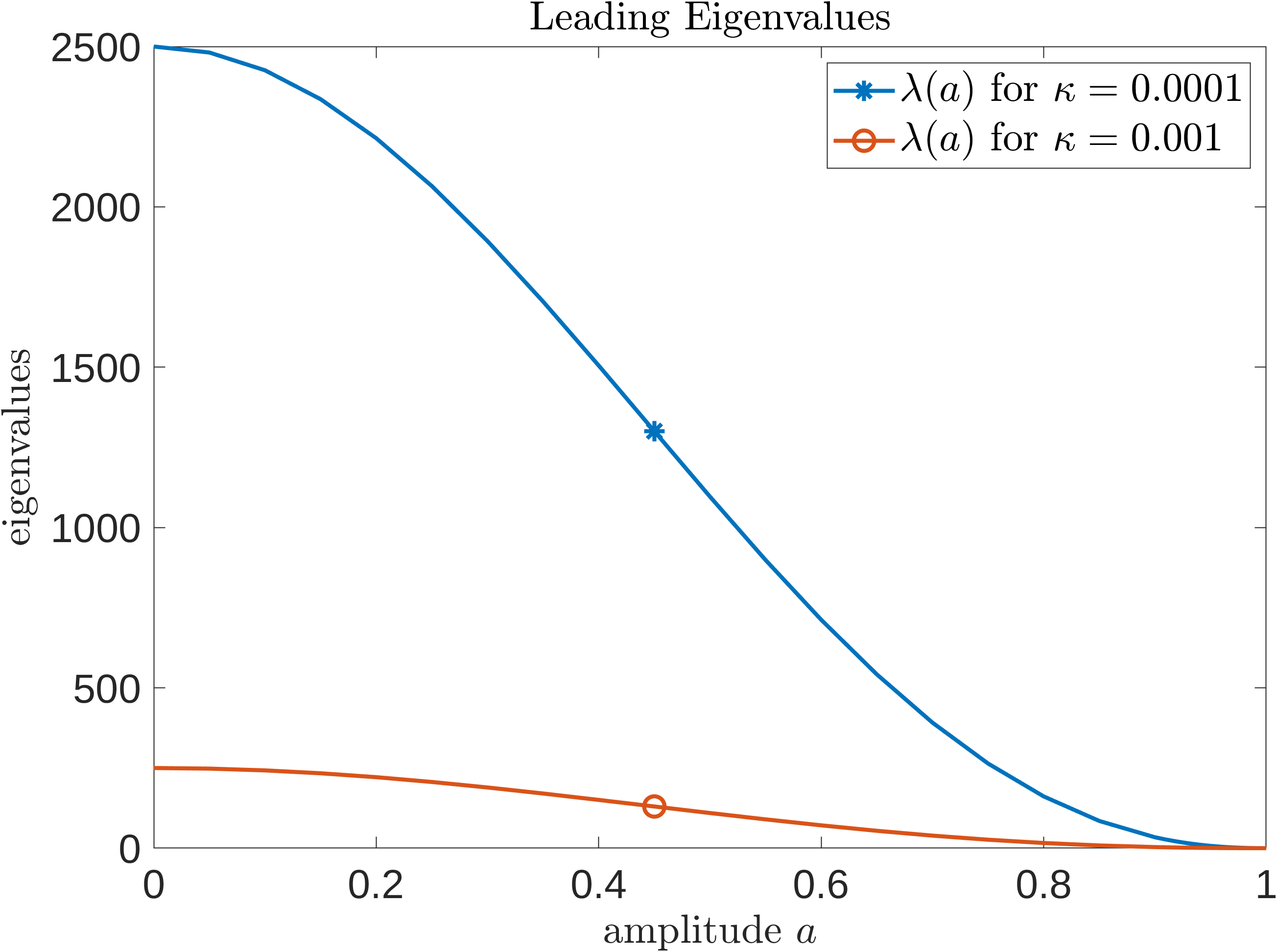}
\end{center}
\caption{Plot of leading eigenvalues versus amplitude, 
computed with $\alpha = \beta = 1$, and $\kappa = 0.001$ and $\kappa = 0.0001$. 
\label{evplot1}}
\end{figure}

According to \cite{H11}, the period $p(t)$ associated with 
an evolving solution of \eqref{ch} evolves approximately 
according to the relation 
\begin{equation} \label{eigenvalue-ode}
    \frac{dp}{dt}
    = \lambda_{\max} (p) p,
    \quad p(t_0) = p_0,
\end{equation}
where $\lambda_{\max} (p)$ denotes the maximum eigenvalue 
associated with the specified periodic solution $\bar{\phi} (x;a(p))$. 


\begin{remark}\label{half_ev_remark}
In the derivation of \eqref{eigenvalue-ode} in \cite{H11}, 
a small modification to Langer's framework is employed. Precisely,
the equation arising from a more faithful adaptation of 
Langer's original relations is
\begin{equation} \label{eigenvalue-ode-langer}
    \frac{dp}{dt}
    = \frac12\lambda_{\max} (p) p,
    \quad p(t_0) = p_0.
\end{equation}
As a side note to our analysis, we will compare how these
two possible methods compare with numerically generated 
solutions. 

If Langer's method and the eigenvalue method are both initialized 
by the minimum period $p_{\min}$ (from \eqref{minimim-period}), 
the evolution in time proceeds as in Figure \ref{max-coarsening-figure}
(computed for $\alpha = \beta = 1$, $\kappa = 0.001$). 
Using $\mathcal{E} (p)$ to map periods to energies, we can evolve
energy as a function of time for both Langer's method and the 
eigenvalue method. This is depicted in Figures \ref{max-energy-figure} and \ref{max-coarsening-figure}. 
In order to see that the primary difference between the two approaches
is a matter of scaling, we included similar plots using  \eqref{eigenvalue-ode-langer} in place of \eqref{eigenvalue-ode}, labeled with the tag, ``(1/2 factor)'' in Figures \ref{max-energy-figure} and \ref{max-coarsening-figure}.
\end{remark}









\begin{figure}[ht] 
\begin{minipage}[T]{0.48\textwidth}
\centering
\begin{subfigure}[T]{1\textwidth}
\includegraphics[width=1\textwidth]{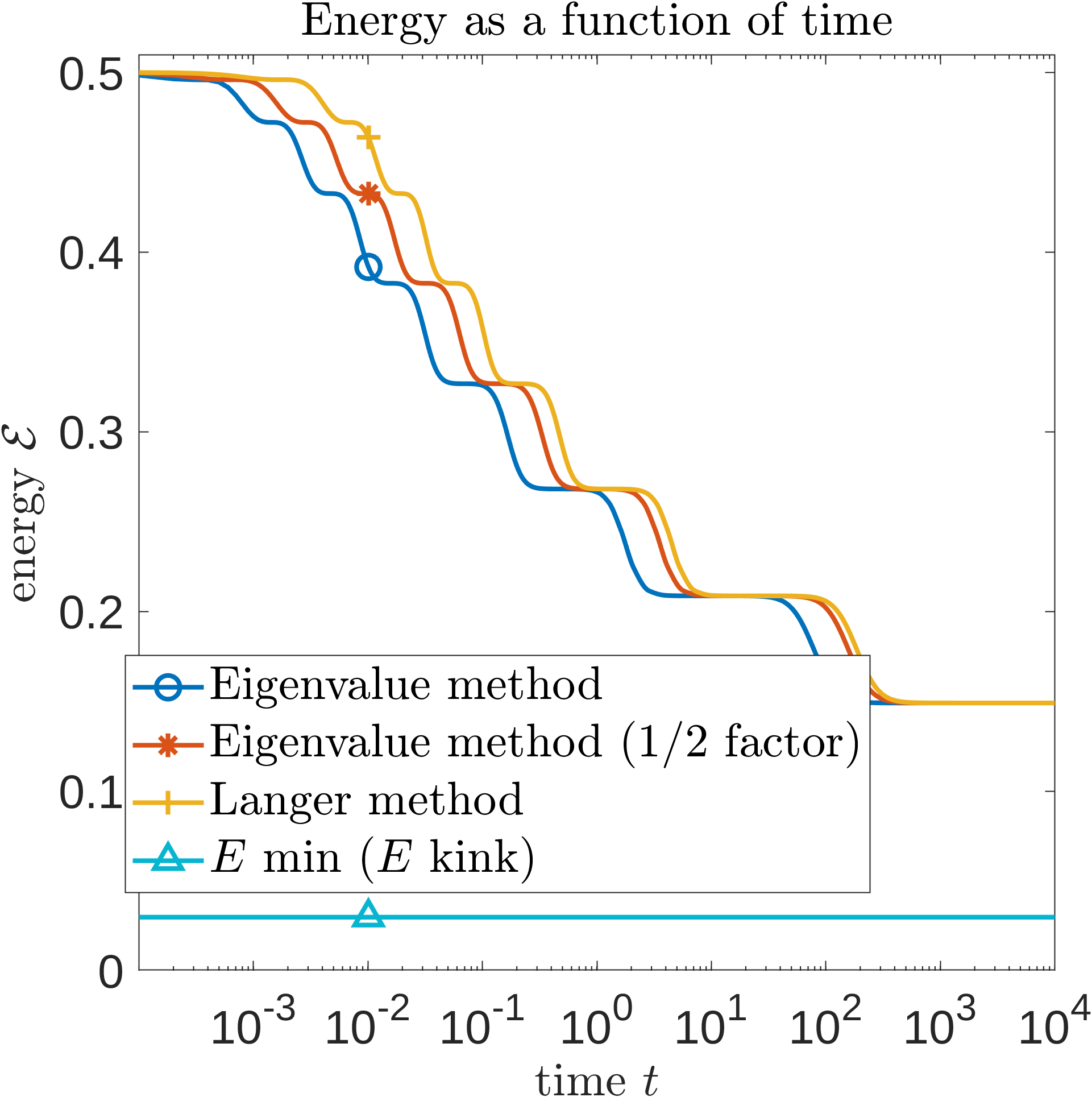}
\caption{\label{max-energy-figure}}
\end{subfigure}
\end{minipage}
\hfill 
\begin{minipage}[T]{0.48\textwidth}
\centering
\begin{subfigure}[T]{\textwidth}
\includegraphics[width=1\textwidth]{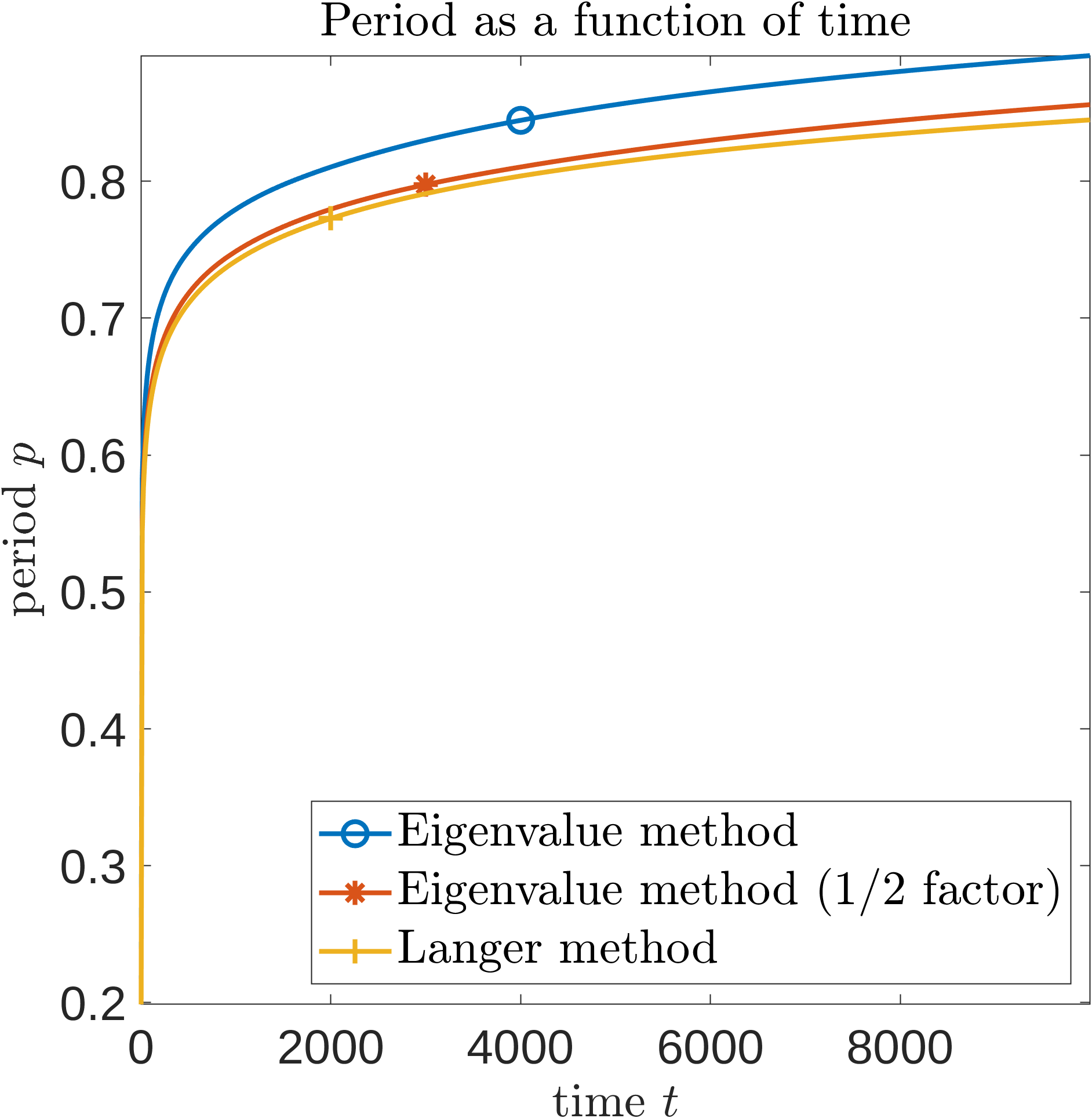}
\caption{\label{max-coarsening-figure}}
\end{subfigure}
\end{minipage}
\caption{\label{2x2}Evolution of energies (a) and periods (b) via Langer's method and the eigenvalue method.  For the plots labeled ``Eigenvalue method,'' equation \eqref{eigenvalue-ode} was used.  For the plots with the tag ``(1/2 factor)'', equation \eqref{eigenvalue-ode-langer} (with the extra $\frac12$-factor) was used (\textit{cf.} Remark \ref{half_ev_remark}).}
\end{figure}

\FloatBarrier

\subsection{Varying \texorpdfstring{$\kappa$}{}}
In this section, we observe that it is straightforward to vary 
$\kappa$ in the above energy calculations. First, we note that 
for a fixed interval $[-L, +L]$ the maximum 
energy specified in \eqref{maximum-energy} does not depend on $\kappa$. 
On the other hand, the minimum energy specified in 
Item (iv) of Proposition \ref{specific-F-proposition} 
is proportional to $\sqrt{\kappa}$. 
For evolution on bounded domains, transition layers are 
typically removed through the boundary in pairs, so we 
expect to see energy drops in steps of size 
$2 E_{\min}$. For $\alpha = 1$, $\beta = 1$, and $L = 1$, 
if $\kappa = 0.001$ then $2 E_{\min} = 0.0596$, 
and likewise if $\kappa = 0.0001$, then $2 E_{\min} = 0.0189$, and if 
$\kappa = 0.00001$, then $2 E_{\min} = 0.0060$. Since the energy 
declines in steps of these sizes, energy plots such as the two 
depicted in Figure \ref{energies_kappa3} have less pronounced steps
for smaller values of $\kappa$. 

For Langer's approach, dependence on $\kappa$ is explicit in 
\eqref{langer_period}, and so changes in $\kappa$ are readily 
accommodated. For the eigenvalue approach, we need to identify 
how the leading eigenvalues vary with $\kappa$. To this end, we 
fix a choice of $F$
with the form \eqref{quarticF}, and we observe 
that if $\bar{\phi} (x)$
denotes a periodic solution of \eqref{ch} obtained with $\kappa = 1$, then 
for any $\kappa > 0$, $\bar{\phi}^{\kappa} (x) := \bar{\phi} (x/\sqrt{\kappa})$ is a 
periodic solution of \eqref{ch} obtained with the value 
$\kappa$. Upon linearization of \eqref{ch} about $\bar{\phi}^{\kappa} (x)$,
we arrive at the eigenvalue problem 
\begin{equation*}
    (-\kappa \psi'' + F'' (\bar{\phi}^\kappa (x)) \psi)'' = \lambda \psi.
\end{equation*}
We can express this equation as 
\begin{equation*}
    (-\kappa \psi'' + b(x/\sqrt{\kappa}) \psi)'' = \lambda \psi,
\end{equation*}
where $b (x/\sqrt{\kappa}) = F'' (\bar{\phi} (x/\sqrt{\kappa}))$, 
and we can also express this as 
\begin{equation*}
    - \kappa \psi'''' + \frac{1}{\kappa} b''(x/\sqrt{\kappa}) \psi 
    + \frac{2}{\sqrt{\kappa}} b' (x/\sqrt{\kappa}) \psi' 
    + b(x/\sqrt{\kappa}) \psi'' = \lambda \psi. 
\end{equation*}
At this point, we make the change of variables
\begin{equation*}
    y = \frac{x}{\sqrt{\kappa}}, \quad
    \Psi (y) = \psi(x) \implies \psi^{(k)} (x) = \frac{1}{\kappa^{k/2}} \Psi^{(k)} (y), 
    \quad k = 1,2,3,\dots.
\end{equation*}
Upon substitution, we see that the equation for $\Psi (y)$ is 
\begin{equation*}
    -(\Psi'' + b(y) \Psi)'' = \kappa \lambda \Psi.
\end{equation*}
We take from this that $\lambda$ is an eigenvalue for 
the equation with general $\kappa$ if and only if $\kappa \lambda$
is an eigenvalue for the equation with $\kappa = 1$. This observation 
allows us to compute leading eigenvalues for any fixed $\kappa > 0$,
and obtain leading eigenvalues associated with all other values 
of $\kappa$ by an appropriate scaling argument. To make this 
precise, suppose that for a specific value $\kappa_0$, we compute
the leading eigenvalue $\lambda_0$ associated with the periodic 
solution with amplitude $a$. Then $\kappa_0 \lambda_0$ will be 
the leading eigenvalue associated with the periodic solution 
with amplitude $a$ arising as a solution to \eqref{ch} with 
$\kappa = 1$. Correspondingly, the leading eigenvalue associated
with the periodic solution with amplitude $a$ arising as a solution
to \eqref{ch} with any general value $\kappa > 0$ will be 
$(\kappa_0/\kappa) \lambda_0$. In fact, we have already seen an 
example of this scaling effect in Figure \ref{evplot1}. 
 
In Figure \ref{energies_kappa3}, we apply these ideas 
to generate for comparison plots of energy as a function 
of time for both Langer's method and the eigenvalue method
for three values of $\kappa$, namely $\kappa = 0.001$, 
$\kappa = 0.0001$, and $\kappa = 0.00001$. In each case, the energy at 
$t = 0$ is the same maximum energy $E_{\max} = 0.5$, 
but the plots are depicted starting with $t = 0.01$,
at which point the energies have already declined at 
varying rates. (These calculations use \eqref{eigenvalue-ode}
rather than \eqref{eigenvalue-ode-langer}.)

\begin{figure}[ht] 
\begin{center}
\includegraphics[width=12cm,height=8.2cm]{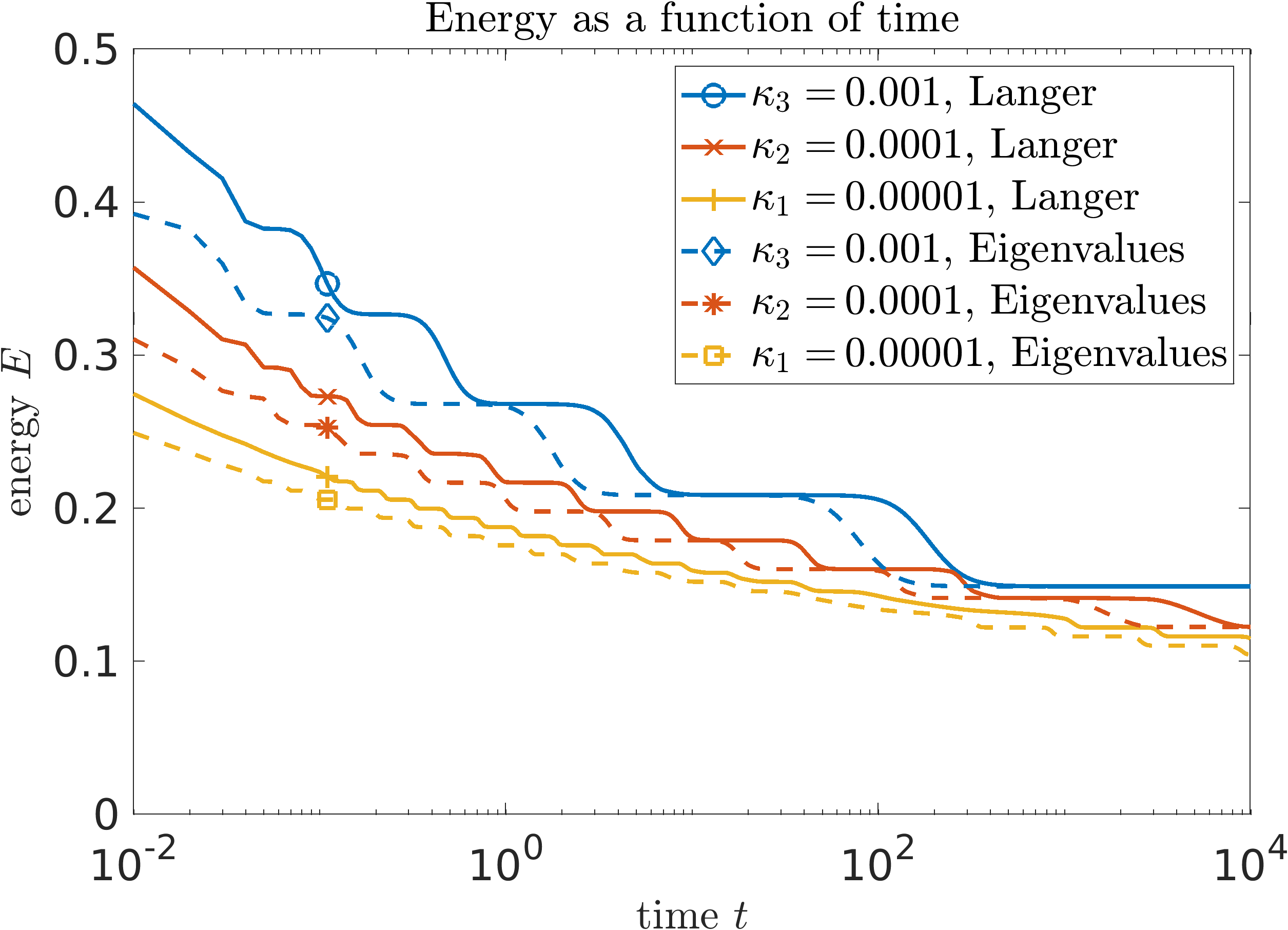}
\end{center}
\caption{Evolution of energies via Langer's method and the eigenvalue method for various values $\kappa$. 
\label{energies_kappa3}}
\end{figure}

\section{Computational Results}
\label{sec-computational-results}

In this section, we consider constant mobility $M\equiv 1$ since the qualitative dynamics do not vary much as long as the mobility is
non-degenerate. 


\subsection{Overview of the numerical methods}
\label{overview-numerical-section}

Here, we give a brief summary of the numerical methods used throughout this analysis.  
As much as possible, we have used standard well-established methods, as our goal is not to 
focus on developing numerical methods, but on understanding dynamical phenomena.


\begin{remark}[Periodic Domain vs.\ Full Space]
While the primary focus of the present work is in the full space $\mathbb{R}$, for the numerical simulations, we work in the periodic domain $[-1,1]$. 
Working in a periodic domain yields spectral accuracy for smooth, localized profiles without introducing boundary layers or buffer‐zones.  By contrast, so-called ``full‐space discretization'' methods  (e.g., sinc‐type methods on $\mathbb{R}$, or a nonperiodic truncation to $[-L_*,L_*]$ as $L_*\rightarrow\infty$, or a remapping of $\mathbb{R}$ to a finite interval) come with their own problems, which we now discuss briefly. 
Sinc methods (see, e.g., \cite{Lund_Bowers_1992_sinc_methods,Stenger_2016_sinc_handbook}) can enjoy spectral accuracy, but to achieve this, one must delicately choose both the grid spacing and the number of nodes carefully to match the unknown strip of analyticity, which poses non-trivial difficulties that would take away from the focus of the present work.  Moreover, the differentiation matrix is dense, and can impose severe timestep restrictions.  Finally, sinc methods require the solution to decay to zero as $|x|\rightarrow\infty$, and hence to recover, e.g., the kink solution \eqref{kink-solution}, one must impose some type of smooth cut-off function anyway.  Similar problems arise with remapping $\mathbb{R}$ to a finite interval, which moreover introduces non-constant coefficients and can introduce undesirable ``endpoint clustering'' (if one discretizes with, e.g., Chebyshev polynomials).  Truncation methods must impose artificial boundary conditions at $x=\pm L_*$ leading to loss of accuracy, and can introduce extra tuning parameters (such as the truncation length $L_*$ or the placement of ghost nodes) and often exhibit slower convergence and stability issues due to the nonlinear terms.  Hence, for the sake of both simplicity and accuracy, our simulations are carried out in the periodic domain, with the usual adjustments in understanding that come with considering dynamics in full space vs. periodic boundary conditions.
Note also that we still consider comparison with the full-space ``kink'' solution \eqref{kink-solution}. In the parameter regimes we consider, this solution has interfaces whose deviations from $\pm1$ decay exponentially fast as $|x|\to\infty$.  When we restrict to $[-1,1]$ with periodic boundary conditions, 
truncating to $[-1,1]$ with periodic boundary conditions incurs an $L^\infty$ error of order
$\mathcal{O}(\exp(-2\sqrt{\tfrac{\beta}{2\kappa}})),$
which is numerically negligible $(\approx 10^{-20})$ for the $\beta,\kappa$ values used in our simulations.  When the solution evolves, the discontinuity at the periodic boundary quickly vanishes and the solution becomes periodic, but otherwise stays near the kink solution in $L^2$ norm, although the free energy differs by roughly $E_{\min}$ due to the additional transition layer at the periodic boundary. Thus, although our analysis is focused on full space, periodic computations on $[-1,1]$ can give a reasonable picture of the dynamics, and using this setting has at least some advantages over ``full-space discretizations'' as described above. 
\end{remark}

All simulations were run using MATLAB version 2024a.  The spatial discretization was done using standard spectral methods, based on MATLAB's \texttt{fft} and \texttt{ifft} (the latter computed using the \texttt{{\textquotesingle}symmetric\textquotesingle} option).  Time-stepping was handled by a semi-implicit Euler method.  
Eyre's convex splitting method \cite{Eyre_1998,Eyre_1997} was employed (in particular, algorithm 5, the ``Linearly Stabilized Splitting Scheme'' proposed in \cite{Eyre_1997}), with the cubic term computed using co-location (i.e., multiplication in physical space).  Due to the presence of the cubic term, the highest half of the wave modes were dealiased (i.e., set to zero before being transformed back to physical space).  
Spatial resolution on the domain $[-1,1)$ was chosen to be $N=8192$, giving a spatial stride of $\Delta x=2\pi/N\approx7.6699\times10^{-4}$.  The time-step $\Delta t\approx 9.7656\times 10^{-5}$ was chosen to respect 
standard CFL conditions.  For simplicity, we chose $\alpha=\beta=K=1$. Our smallest choice
of interfacial energy $\kappa\geq0.00001$, was found experimentally using the resolution criterion that the energy spectra of dealiased modes of 
$\phi$ 
must be below machine precision ($\approx2.2204\times10^{-16}$) at all time steps after roughly time $t\approx 0.001$ (obviously, since we are starting with normally-distributed random values at each point, the initial data, and consequentially the first few time steps, are not expected to satisfy this criterion).

We often initialize the equation with ``random'' initial data, which has been evolved until the free energy is just below $0.99 E_{\max}$, our
tolerance taken somewhat arbitrarily to be $10^{-4}$, where $E_{\max}:=\frac{\beta^2 L}{4\alpha}$.  This is carried out in the following way.  First, for the purposes of easy replication of our results, we seed the random number generator (RNG) with seed \verb|0|, then we take normally-distributed mean-zero  data at each of the $N$ points in space with standard deviation \verb|0.1| (i.e., $\mathcal{N}(0,0.1)$).  The result is then transformed with the discrete Fourier transform, where the upper $N/2$ wave modes are removed (for dealiasing the cubic term).  For clarity, the MATLAB code used for this operation is as follows:
\begin{lstlisting}
    rng(0,'twister');
    phi_hat = fft(0.1*randn(1,N));
    phi_hat((N/4+1):(3N/4+1)) = 0;
    phi = ifft(phi_hat,'symmetric');
\end{lstlisting}
In our simulations, we used $N=2^{11}=2048$ and $L=\alpha=\beta=1$.
The RNG was only seeded by zero on the first trial.  On subsequent trials, the seed was determined by whatever the previous state of the system was.  After the initialization of $\phi$ described above, the simulation was then run until free energy satisfied the criterion stated above.  For this last stage, we used a variable time-step: if the free energy was not within the desired tolerance of $0.99 E_{\max}$, the time step was thrown out and recomputed with half the previous time-step.  Once this process was completed, the result was used to initialize our Cahn--Hilliard simulations.  

\subsection{Simulations for the Cahn--Hilliard Equation}
\label{uncoupled-system-section}
We now carry out our direct computations for the Cahn--Hilliard equation, emphasizing the energy 
evolution and associated coarsening dynamics.
In Figure \ref{energies_av}, we depict energies (Figure \ref{energies_av_a}) and periods
(Figure \ref{energies_av_b}) for 50 trials of the Cahn--Hilliard equation---trials in gray, 
average in thicker black---along with energies and periods computed with the two analytical methods
described in Section \ref{coarsening-rates-section}. For the computationally generated values,  
initial data for each trial was computed pseudo-randomly as described  
in Section \ref{overview-numerical-section}, except with the random number generator seeded randomly 
each time, using \texttt{rng(randi(10000))}.  The free energy was computed for each trial at each 
time-step, and the mean of these curves was computed and displayed in Figure \ref{energies_av_a}.  
This is the first direct quantitative comparison that we are aware of for the energies obtained from 
these three approaches, and we interpret the consistency of the results at once as a justification of 
Langer's approach in \cite{L71} (and subsequently as modified in \cite{H11}) and a verification that 
our computations are faithfully capturing dynamics of the energy. We observe that 
the horizontal shelves for the three plots occur approximately at integer multiples of the minimum 
energy, and correspond with the number of transitions in the solution at that point. For the two 
analytic methods, the underlying periodic solutions are centered with a transition passing through 
$x = 0$, so by symmetry each shelf will correspond with an odd multiple of the minimum energy (i.e., 
there will be a transition at $x = 0$ and the same number of transitions to the right and left of 
this). For the computed energies, periodicity requires an even number of transitions, so the horizontal 
shelves occur at even multiples of the minimum energy. Notably, this means that the analytic curves
will approach $E_{\min}$ asymptotically while the computed curves will approach $2 E_{\min}$ 
asymptotically. In particular, it's clear that the analytic curves are only transiently above
the computed curve. 

In Figure \ref{energies_av_b}, we depict the evolution of the periods associated with the energies 
in Figure \ref{energies_av_a}. For the two analytic methods, these periods are computed directly 
from \eqref{langer_period} (Langer's method) and \eqref{eigenvalue-ode-langer} (eigenvalue method), 
while for the computational approach the periods are computed from the energies via the pseudoinverse 
as specified in Definition \ref{pseudoinverse} (though see the caption of Figure \ref{energies_av} 
for a note on this). The relatively large discrepancy in the latter stages of the evolution is the 
result of long shelves in $E (p)$, as depicted in Figure \ref{energy-period-figure}. As noted in the 
previous paragraph, the analytic energy will asymptotically pass below the computed energy, and 
correspondingly the analytic values for coarseness will become larger than the computed values.  

Since the two analytic models were developed by considering \eqref{ch} on $\mathbb{R}$,
it's natural to expect a better correspondence for larger intervals $[-L, L]$. In order to see that
this is indeed the case, we include in Figure \ref{energies_av_big_interval_short_time} plots of
energies and periods computed for $L = 10$ (and keeping $\kappa = .001$). The horizontal shelves are
again separated by steps of size $2 E_{\min}$, but these steps are now relatively quite small, and we
see close correlation especially between the method of \cite{H11} and the computed values. Once again, 
the coarseness obtained by computation trends slightly below the coarseness obtained from the analytic
models for later times in the figure, but we again note that ultimately this will switch so that
asymptotically the 
coarseness obtained from computation will be slightly above the coarseness obtained from the analytic 
models. 
Unfortunately, the time expected to see this final switch with parameter values $L = 10$ and $\kappa = .001$
is about $10^{18}$, so no such plot was attempted. Rather, we include in Figure \ref{energies_av_small_interval_large_time}
a simple plot obtained for $L = 1$ and the larger value $\kappa = .01$ to illustrate the expected 
time-asymptotic behavior.


\begin{figure}[ht] 
\begin{subfigure}[t]{0.49\textwidth}
\includegraphics[width=1\textwidth]{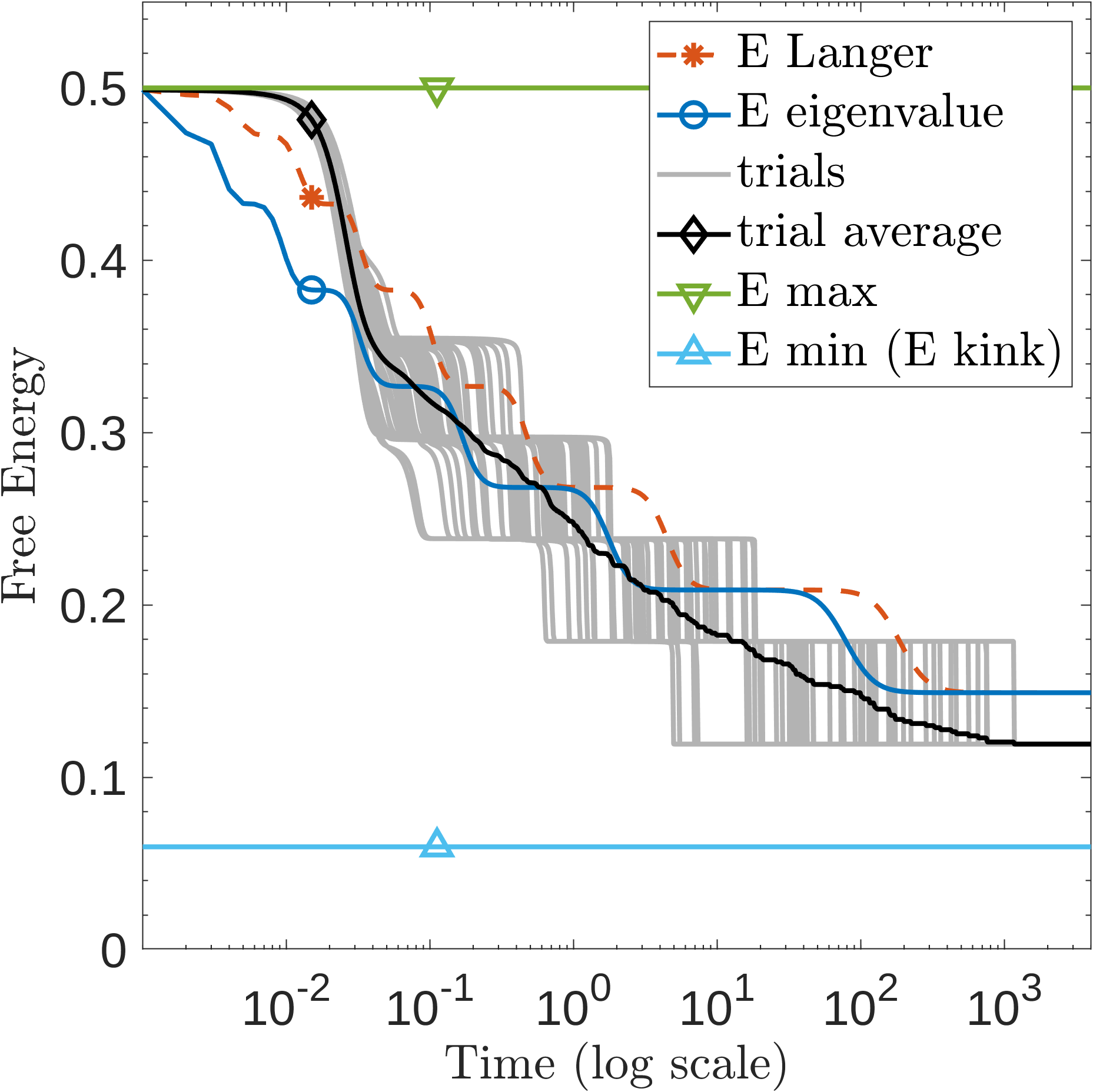}
\caption{\label{energies_av_a} (linear-log scale) Free Energy of $\phi$ vs. time}
%
\end{subfigure}
\begin{subfigure}[t]{0.48\textwidth}
\includegraphics[width=1\textwidth]{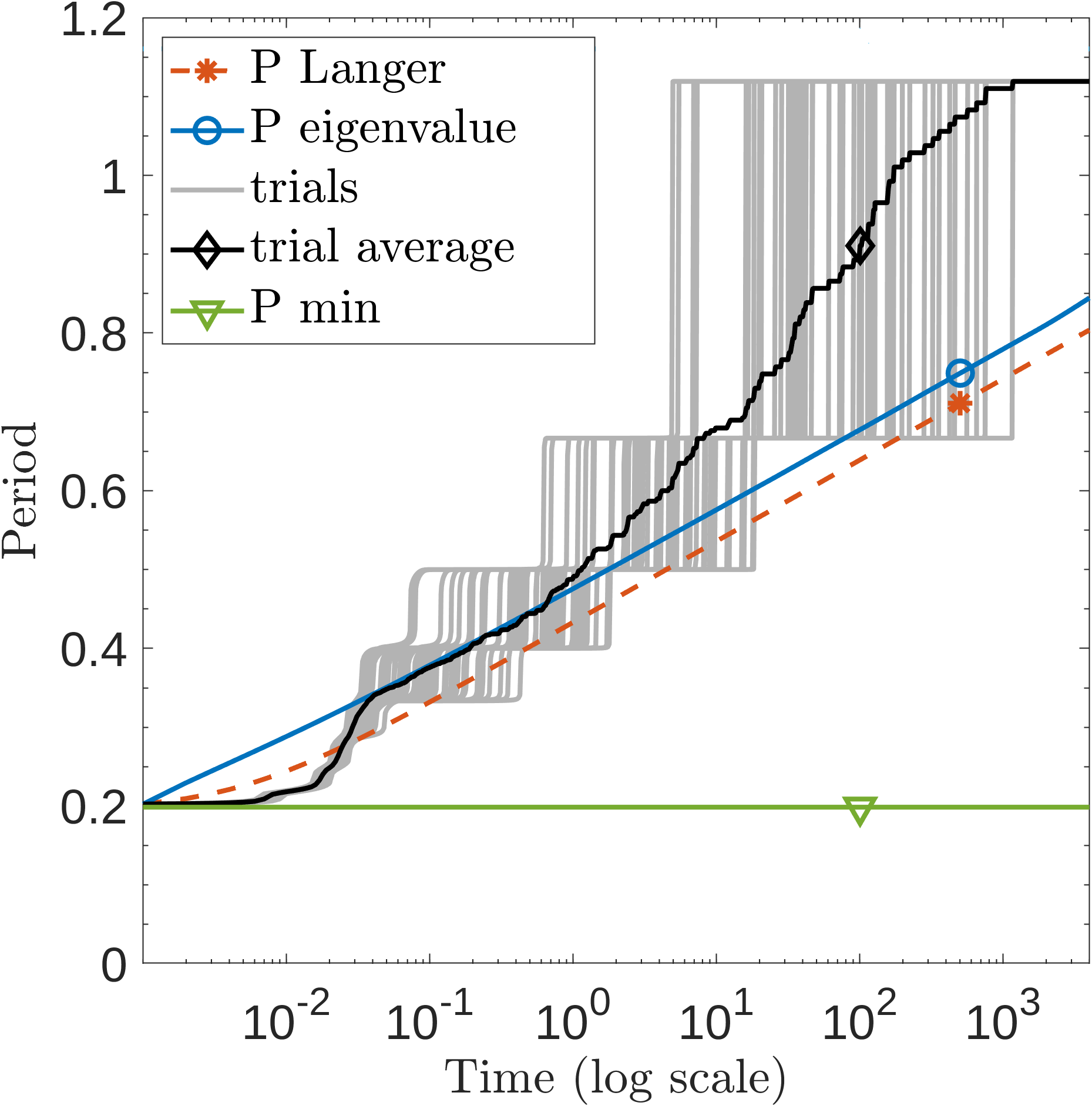}
\caption{\label{energies_av_b}  (linear-log scale) Period of $\phi$ vs. time.}
\end{subfigure}
\caption{\label{energies_av}
Free energy and periods of 50 trials of the Cahn-Hilliard equation ($\kappa=0.001$, $\alpha=\beta=1$, $N=2^{11}=2048$, Domain: $[-1,1]$, $\Delta t=0.001$, final time $T=4000$), and the average of these, along with the maximum free energy \eqref{maximum-energy}, the kink energy \eqref{E_kink}, and the Langer and eigenvalue predictions.
In principle, the computational plots in Figure (b) should be generated by applying the pseudoinverse map 
from Definition \ref{pseudoinverse} to the plots in Figure (a), but in practice we found it 
more efficient to generate a map by interpolating periods as a function of energies. It is clear 
from our discussion in Section \ref{MeasureofCoursening} that the two approaches must give
nearly identical results.} 
\end{figure}

\begin{figure}[ht] 
\begin{subfigure}[t]{0.49\textwidth}
\includegraphics[width=1\textwidth]{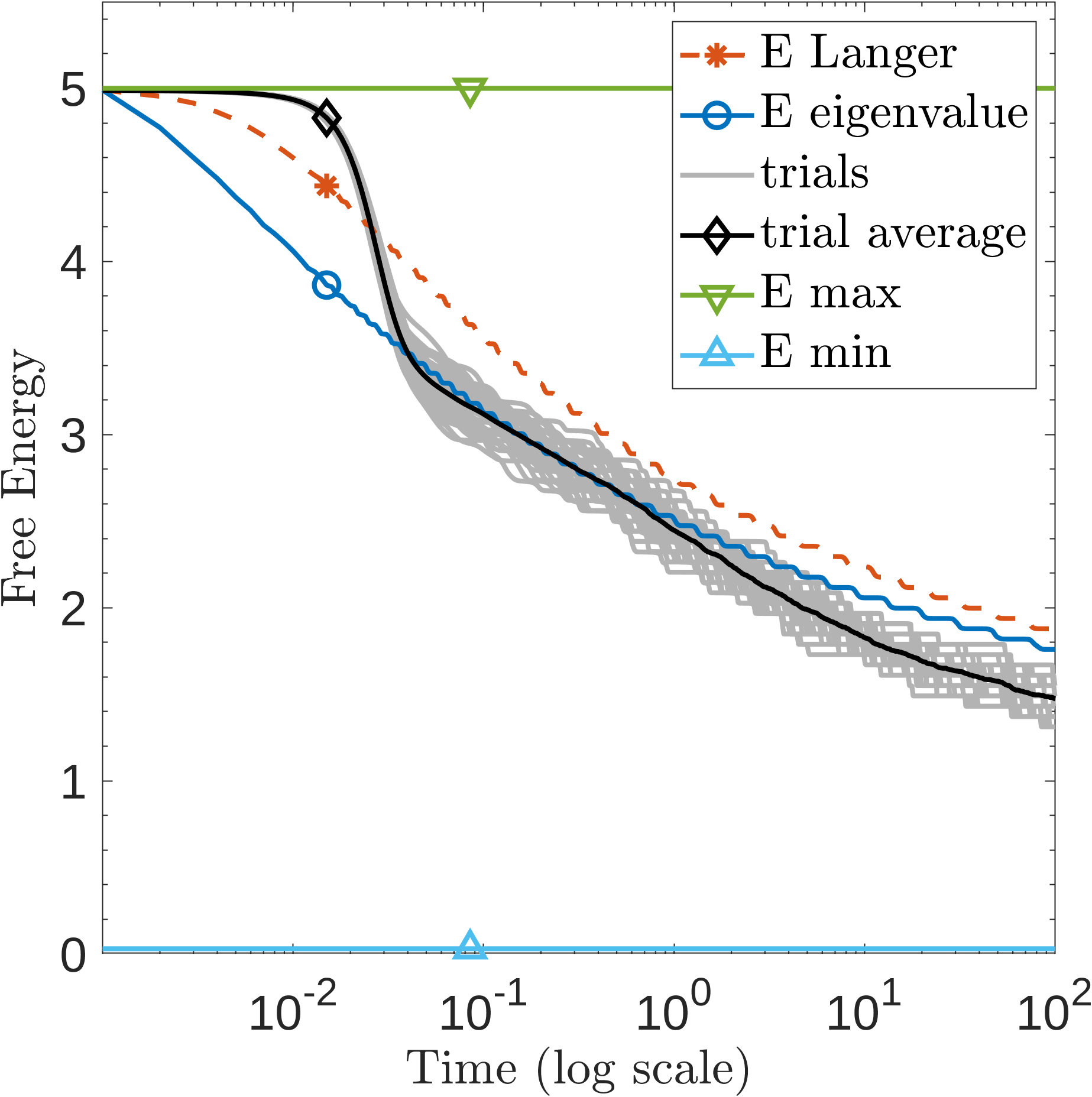}
\caption{\label{energies_av_a_big_interval_short_time} (linear-log scale) Free Energy of $\phi$ vs. time}
%
\end{subfigure}
\begin{subfigure}[t]{0.48\textwidth}
\includegraphics[width=1\textwidth]{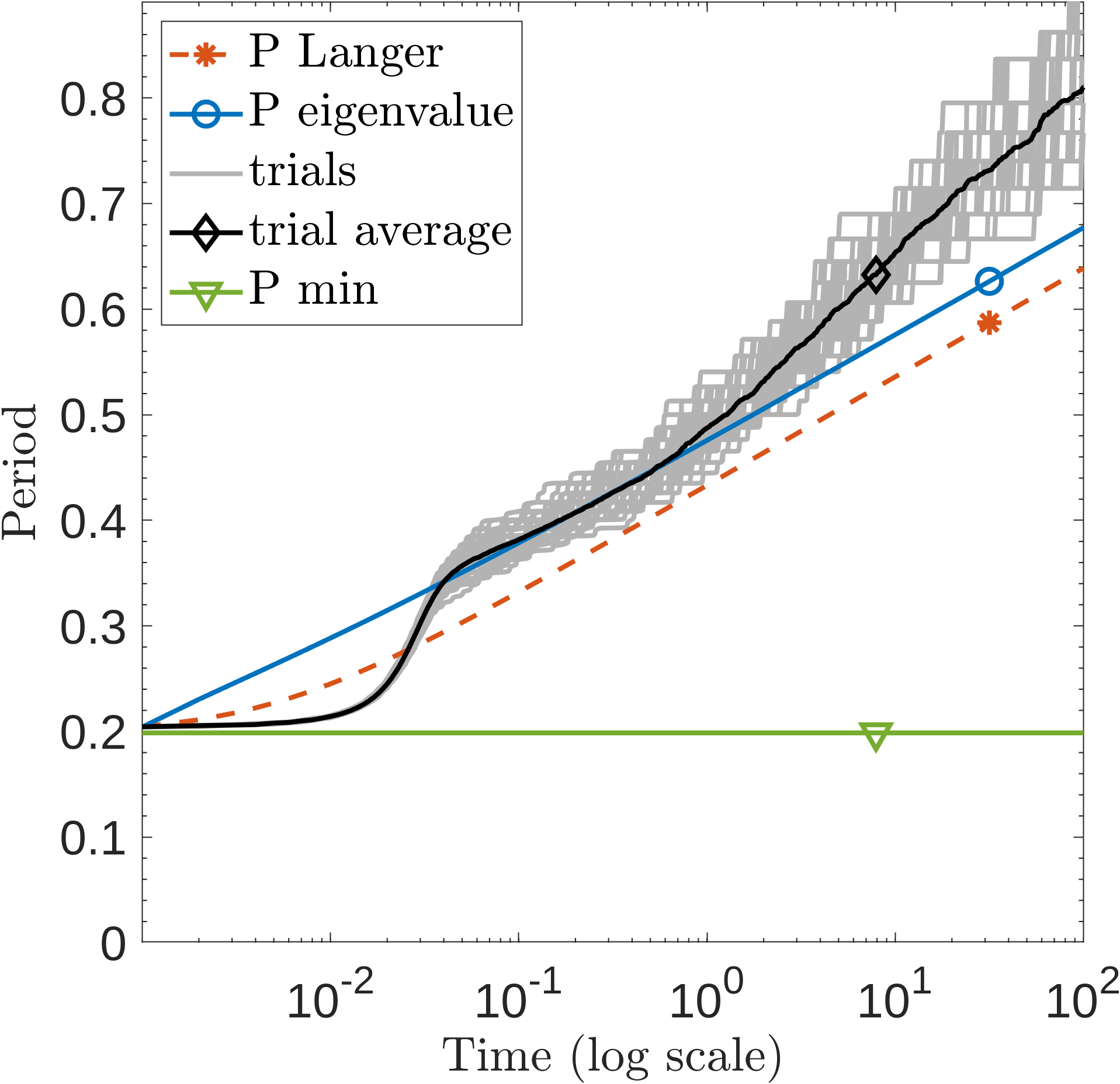}
\caption{\label{energies_av_b_big_interval_short_time}  (linear-log scale) Period of $\phi$ vs. time.}
\end{subfigure}
\caption{\label{energies_av_big_interval_short_time}
Free energy and periods of 50 trials of the Cahn-Hilliard equation ($\kappa=0.001$, $\alpha=\beta=1$, $N=2^{13}=8192$, Domain: $[-10,10]$, $\Delta t=0.001$, final time $T=100$), and the average of these, along with the maximum free energy \eqref{maximum-energy}, the kink energy \eqref{E_kink}, and the Langer and eigenvalue predictions.  
}
\end{figure}

\begin{figure}[ht] 
\begin{subfigure}[t]{0.49\textwidth}
\includegraphics[width=1\textwidth]{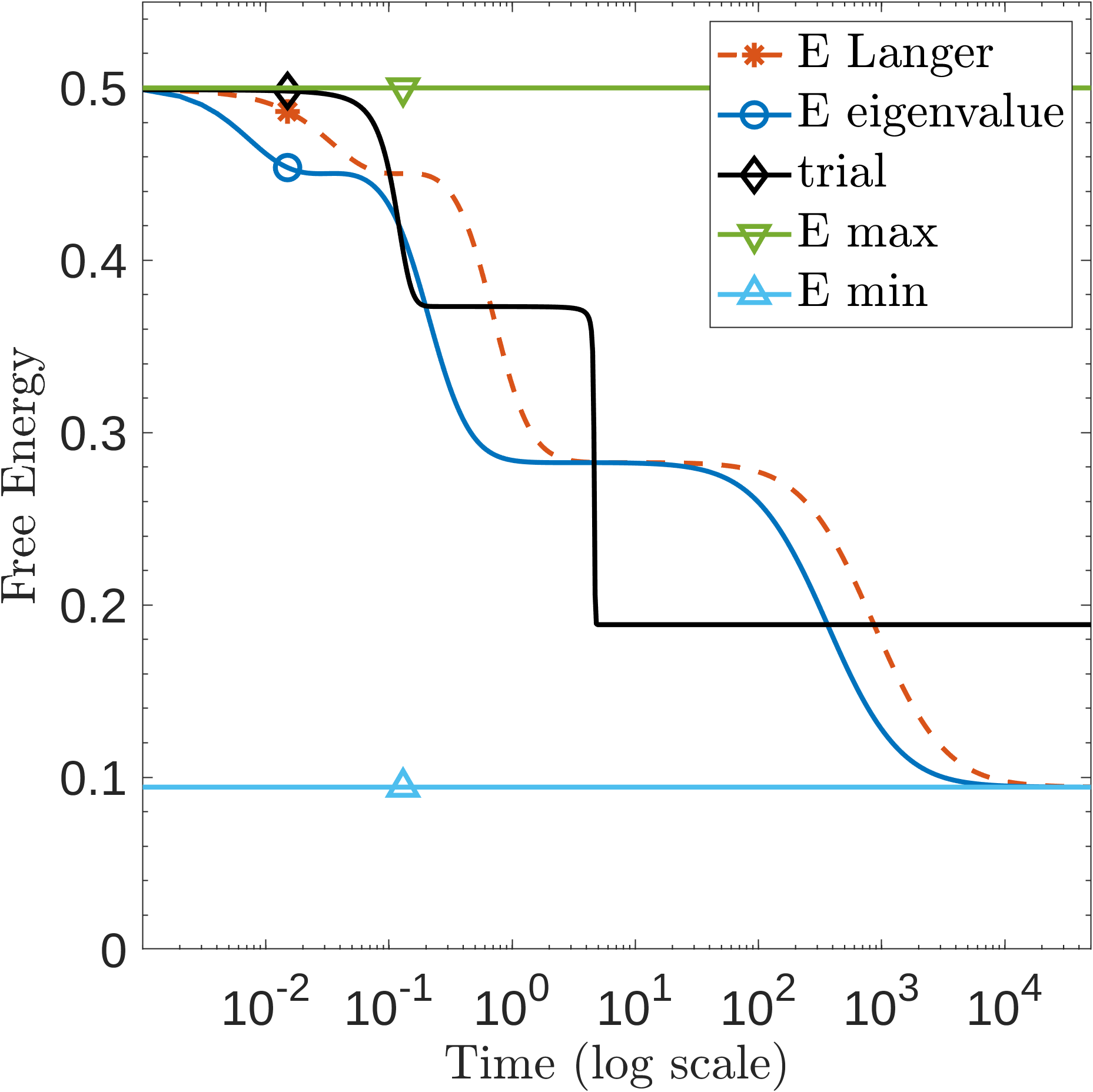}
\caption{\label{energies_av_a_small_interval_large_time} (linear-log scale) Free Energy of $\phi$ vs. time}
%
\end{subfigure}
\begin{subfigure}[t]{0.48\textwidth}
\includegraphics[width=1\textwidth]{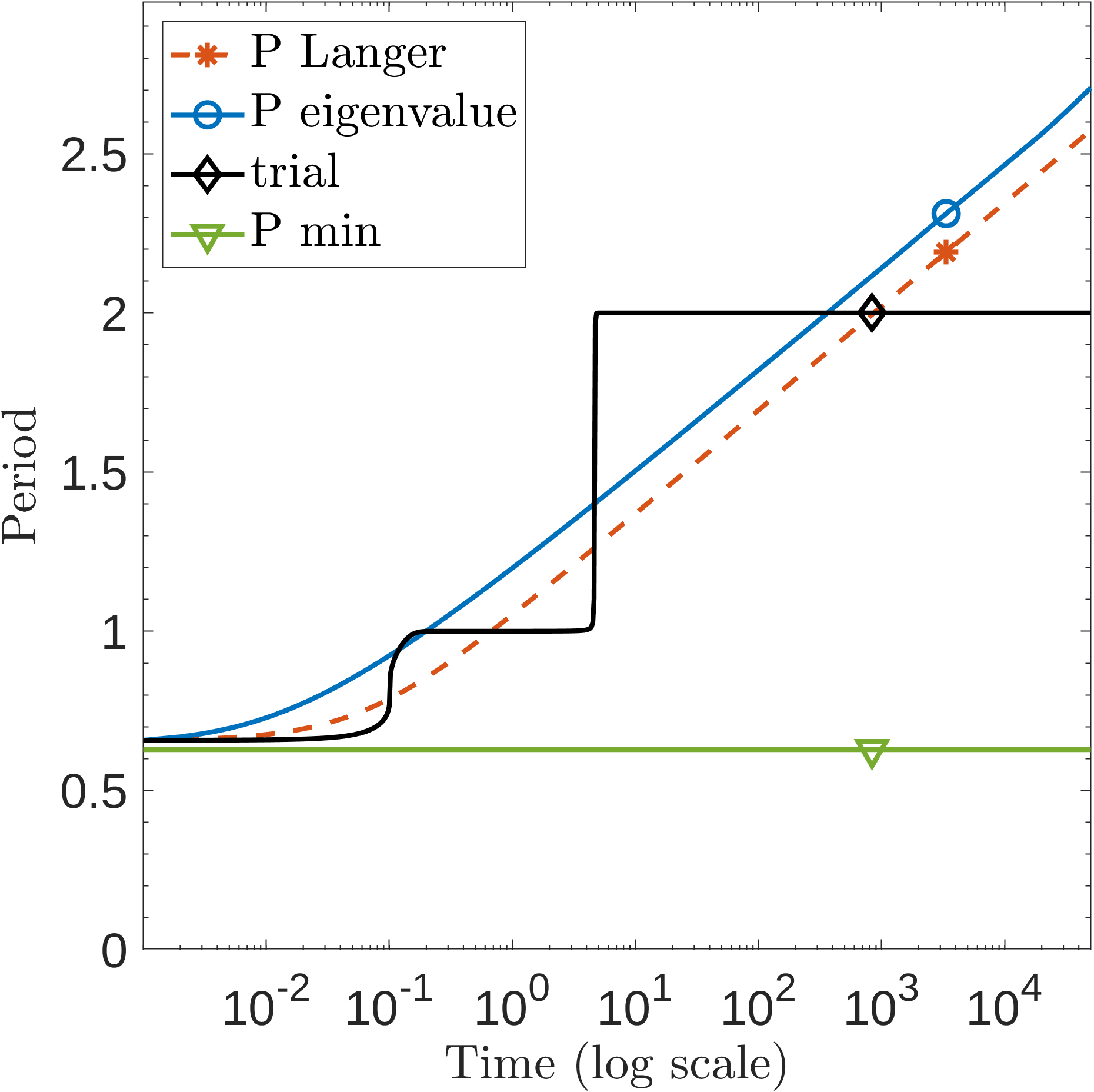}
\caption{\label{energies_av_b_small_interval_large_time}  (linear-log scale) Period of $\phi$ vs. time.}
\end{subfigure}
\caption{\label{energies_av_small_interval_large_time}
Free energy and periods of 1 trial of the Cahn-Hilliard equation ($\kappa=0.01$, $\alpha=\beta=1$, $N=2^{11}=2048$, Domain: $[-1,1]$, $\Delta t=0.001$, final time $T=50,000$), along with the maximum free energy \eqref{maximum-energy}, the kink energy \eqref{E_kink}, and the Langer and eigenvalue predictions.  As expected, we see in (a) that the eigenvalue and Langer predictions, which are based on the full-space domain, drop below the trial simulation's lowest energy state, as it is on the periodic domain, and therefore has one additional kink.
}
\end{figure}

\FloatBarrier

\section{Conclusion} \label{conclusions-section}

In this paper, we have introduced a new measure of coarseness for processes modeled by Cahn--Hilliard equations, 
and used this measure to compare two analytic descriptions of coarsening with computationally generated 
dynamics. Notably, our measure of coarseness circumvents the need for any \textit{a priori} assumptions about the 
structure of the solution, such as near-periodicity, allowing us to associate a value of coarseness with any 
solution to \eqref{ch}. 

In future work, we plan to use our measure of coarseness to investigate coarsening dynamics in systems obtained 
when Cahn--Hilliard equations are coupled with a fluids equation to create a model of two-phase flow. In particular, 
we will start by coupling the Cahn--Hilliard equations considered here with the viscous Burgers equation---creating a   
natural one-dimensional analog of the Cahn--Hilliard--Navier--Stokes system. Our expectation is that coupling will
have a substantial effect on coarsening rates, and our goal will be to meaningfully quantify this effect.  
We will also study theoretical properties of the coupled Burgers--Cahn--Hilliard equations, such as global well-posedness. 
Our long-term goal is to extend our study to multi-dimensional Cahn--Hilliard Systems, the Cahn--Hilliard--Navier--Stokes 
system, and other related equations. Although our 1D investigations point toward promising directions to pursue in 
such studies, we expect new phenomena to emerge in those cases. In particular, new analytic tools are needed for 
higher dimensional cases in order to evaluate and analyze coarsening rates.

\appendix

\section{Appendix: Proofs of Propositions \ref{period-proposition} and \ref{specific-F-proposition}} 
\label{proofs-appendix}

\begin{proof}[Proof of Proposition \ref{period-proposition}.]
In order to abbreviate notation, we will write 
\begin{equation*}
    p(a) = 2 \sqrt{2 \kappa} \tilde{p} (a),
    \quad \tilde{p} (a) = \int_0^a \frac{dy}{\sqrt{F(y) - F(a)}}. 
\end{equation*}
By setting $z = y/a$, we can express $\tilde{p} (a)$ as 
\begin{equation*}
    \tilde{p} (a) = \int_0^1 \frac{a}{\sqrt{F(az) - F(a)}} dz,
\end{equation*}
for which we can justify differentiating through the integral. This
gets us to the relation 
\begin{equation} \label{p-tilde-equation}
    \tilde{p}'(a)
    = \int_0^1 \frac{(F(az) - \frac{az}{2} F' (az)) - (F(a) - \frac{a}{2} F' (a))}{(F(az) - F(a))^{3/2}} dz.
\end{equation}
In order to establish positivity of $\tilde{p}'(a)$, we will set 
\begin{equation*}
G(y) := F(y) - \frac{y}{2} F' (y),
\end{equation*}
and show that $G(az) - G(a)$ is positive for all $z \in (0, 1)$.  
First, $G' (y) = \frac{1}{2} F'(y) - \frac{y}{2}F''(y)$, so in 
particular $G' (0) = \frac{1}{2} F'(0) = 0$. 
Next, by assumption, $G''(y) = - \frac{y}{2} F'''(y) < 0$
for all $y \in (0, a)$, so $G'(y)$ is a decreasing function,
and we must have $G' (y) < 0$ for all $y \in (0, a)$. It 
follows that $G (y)$ is a decreasing function, so $G (a) < G (az)$
for all $z \in (0, 1)$, giving the second claim.

For the first claim, it is convenient to return to $y = az$ and 
rearrange \eqref{p-tilde-equation} as 
\begin{equation} \label{p-tilde-equation2}
    \tilde{p}'(a)
    = \int_0^a \frac{\frac{1}{2} (F' (a) - F' (y)) + \frac{1}{a} (F(y) - F(a))}{(F(y) - F(a))^{3/2}} dy
    + \frac{1}{2} \lim_{\tau \to a^-} \int_0^{\tau} \frac{F'(y) - \frac{y}{a} F'(y)}{(F(y) - F(a))^{3/2}} dy,
\end{equation}
where the final integral has been expressed as a limit to justify 
integrating by parts. Integrating by parts, we find 
\begin{equation*}
    \begin{aligned}
\lim_{\tau \to a^-} \int_0^{\tau} \frac{F'(y) - \frac{y}{a} F'(y)}{(F(y) - F(a))^{3/2}} dy
&= - \int_0^a \frac{\frac{2}{a} (F(y) - F(a))}{(F(y) - F(a))^{3/2}} dy
+ \lim_{\tau \to a^-} \frac{-2 (1 - \frac{y}{a})}{\sqrt{F (y) - F(a)}} \Bigg|_0^\tau \\
&= - \int_0^a \frac{\frac{2}{a} (F(y) - F(a))}{(F(y) - F(a))^{3/2}} dy
+ \frac{2}{\sqrt{F (0) - F(a)}}. 
    \end{aligned}
\end{equation*}
Recalling the factor of $1/2$, we see that the first summand on the right-hand side of 
this last expression cancels with the second part of the first integral on the right-hand side of 
\eqref{p-tilde-equation2}, leaving 
\begin{equation*}
    \tilde{p}' (a)
    = \frac{1}{\sqrt{F (0) - F(a)}} 
    + \int_0^a \frac{\frac{1}{2} (F' (a) - F' (y))}{(F(y) - F(a))^{3/2}} dy. 
\end{equation*}
Recalling the specification $p(a) = 2 \sqrt{2 \kappa} \tilde{p} (a)$, we see 
that the proof is complete. 
\end{proof}

\begin{proof}[Proof of Proposition \ref{specific-F-proposition}.]
    Items (i) and (ii) are clear from evaluation of the indicated formulas at 
    the specific family of bulk free energy densities \eqref{quarticF}. For Item (iii),
    we first observe that for $F$ as specified in \eqref{quarticF}, we can 
    write 
    \begin{equation*}
        F(y) - F(a)
        = \left(\frac{\alpha}{4} y^2 + \frac{F(a) - F(0)}{a^2}\right) (y^2 - a^2).
    \end{equation*}
    In this case, \eqref{integralu} becomes 
    \begin{equation*} 
    \int_0^{\bar{\phi} (x;a)} 
    \frac{dy}{\sqrt{\frac{2}{\kappa} (\frac{\alpha}{4} y^2 + \frac{F(a) - F(0)}{a^2}) (y^2 - a^2)}} = x.
    \end{equation*}  
    In order to express this as a Jacobi elliptic function, we first multiply both 
    sides by the factor $(1/a) \sqrt{- 2 (F(a) - F(0))/\kappa}$ to obtain 
    \begin{equation*} 
    \int_0^{\bar{\phi} (x;a)} 
    \frac{\frac{1}{a} \sqrt{\frac{-2 (F(a) - F(0))}{\kappa}}}
    {\sqrt{\frac{2}{\kappa} (\frac{\alpha}{4} y^2 + \frac{F(a) - F(0)}{a^2}) (y^2 - a^2)}} dy 
    = \frac{x}{a} \sqrt{\frac{-2 (F(a) - F(0))}{\kappa}},
    \end{equation*}  
    which we can rearrange to 
    \begin{equation*} 
    \int_0^{\bar{\phi} (x;a)} 
    \frac{dy}{\sqrt{(a^2 - y^2) (1 + \frac{\alpha a^2}{4(F(a) - F(0))}y^2)}}
    = \frac{x}{a} \sqrt{\frac{-2 (F(a) - F(0))}{\kappa}}.
    \end{equation*}  

    If we now set $y = a \sin \theta$ and specify $k$ as 
    in \eqref{kdefined}, we obtain the relation 
    \begin{equation*}
        \int_0^{\bar{\theta}(x;a)} \frac{d \theta}{\sqrt{1 - k^2 \sin^2 \theta}}
        = \frac{x}{a} \sqrt{\frac{- 2 (F(a) - F(0))}{\kappa}},
    \end{equation*}
    where $\sin \bar{\theta}  = \bar{\phi}/a$. By definition of the Jacobi 
    elliptic function $\operatorname{sn}$ (see Remark \ref{periodic-solution-details}), 
    \begin{equation*}
    \operatorname{sn} \left(\sqrt{\frac{- 2 (F(a) - F(0))}{\kappa}} \frac{x}{a}, k\right)
    = \sin \bar{\theta} (x; a) = \frac{\bar{\phi} (x; a)}{a},
    \end{equation*}
    from which Item (iii) follows immediately.  

    Turning to Item (iv), it is straightforward to verify directly 
    that \eqref{kink-solution} is indeed a stationary solution 
    to \eqref{ch} with $F$ as in \eqref{quarticF}. For the 
    energy claim, we need to evaluate $E$ in \eqref{ch_energy} 
    at $K(x)$. For this, we first observe that $K (x)$ satisfies
    the equation 
    \begin{equation*}
        K' = \sqrt{\frac{2}{\kappa} F (K(x))},
    \end{equation*}
    so that 
    \begin{equation*}
        E(K(x)) = \int_{-L}^{+L} F (K (x)) + \frac{\kappa}{2} \cdot \frac{2}{\kappa} F (K (x)) dx
        = 2 \int_{-L}^{+L} F(K(x)) dx. 
    \end{equation*}
    We make the change of variables $y = K(x)$, so that 
    \begin{equation*}
        dy = K'(x) dx = \sqrt{\frac{2}{\kappa} F(K(x))} dx,
    \end{equation*}
    giving 
    \begin{equation*}
    \begin{aligned}
        E(K(x))
        &= \sqrt{2 \kappa} \int_{K (-L)}^{K (+L)} \sqrt{F(y)} dy
        = \sqrt{\frac{\kappa \alpha}{2}} \int_{K (-L)}^{K (+L)} \frac{\beta}{\alpha} - y^2 dy \\
        &= \sqrt{2 \kappa \alpha} K (L) \Big(\frac{\beta}{\alpha} - \frac{K (L)^2}{3} \Big),
    \end{aligned}
    \end{equation*}
    where in obtaining the final equality, we integrated directly 
    and observed the relation $K (-L) = -K(L)$. Finally, the 
    statement about $E_{\min}^{\infty}$ is clear from the limit 
    \begin{equation*}
        \lim_{L \to \infty} K(L) = \sqrt{\frac{\beta}{\alpha}}. 
    \end{equation*}
\end{proof}

\section*{Acknowledgments}
  The authors would like to thank editors and the reviewers for their insightful and constructive comments, which have helped us to improve significantly the quality of the paper.
  A.L. was supported in part by NSF grants DMS-2206762, DMS-2510494, and CMMI-1953346, and USGS Grant No. G23AC00156-01. Q.L. was partially supported by the Simons Foundation (SFI-MPS-TSM-00013384).

\bibliographystyle{abbrv}
\bibliography{refs}
\end{document}

\end{document}